\documentclass{amsart}

\usepackage[english]{babel}
\usepackage[latin1]{inputenc}
\usepackage[dvips,final]{graphics}
\usepackage{amsmath,amsfonts,amssymb,amsthm,amscd,array,stmaryrd,mathrsfs}
\usepackage{pstricks}
 \usepackage[all]{xy}
\usepackage{textcomp}
 \usepackage[final]{epsfig}
\vfuzz2pt 
\hfuzz2pt 
\theoremstyle{plain}
\newtheorem{thm}{Theorem}
\newtheorem{lem}{Lemma}[section]

\newtheorem{prop}[lem]{Proposition}

\theoremstyle{definition}
\newtheorem{defn}[lem]{Definition}
\newtheorem{rem}[lem]{Remark}
\newtheorem{ex}[lem]{Example}


\newcommand{\Z}{\mathbb{Z}}
\newcommand{\C}{\mathbb{C}}
\newcommand{\N}{\mathbb{N}}

\newcommand{\Pb}{\mathbb{P}}

\newcommand{\Ab}{\mathbf{A}}

\newcommand{\jtp}{\mathcal{J}_{\sigma}(\theta,p)}

\newcommand{\Sc}{\mathcal{S}}

\newcommand{\Kc}{\mathcal{K}}
\newcommand{\Ac}{\mathcal{A}}

\newcommand{\Jc}{\mathcal{J}}

\newcommand{\Ic}{\mathcal{I}}

\newcommand{\F}{\mathscr{F}}

\newcommand{\Vc}{\mathscr{V}}

\newcommand{\lf}{\mathfrak{v}}
\newcommand{\Lf}{\mathfrak{L}}

\newcommand{\Wf}{\mathfrak{W}}
\newcommand{\Kf}{\mathfrak{K}}
\newcommand{\Of}{\mathfrak{o}}
\newcommand{\Sf}{\mathfrak{s}}

\newcommand{\gf}{\mathfrak{g}}

\newcommand{\half}{\frac{1}{2}}

\newcommand{\quart}{\textstyle \frac{1}{4}}
\newcommand{\End}{\textup{\bf End}}
\newcommand{\Hom}{\textup{Hom}}

\newcommand{\e}{\varepsilon}

\newcommand{\asl}{\mathrm{asl}}

\newcommand{\Der}{\textup{\bf Der}}

\newcommand{\ddz}{\frac {d}{dz}} 

\def\a{\alpha}

\def\d{\delta}
\def\e{\varepsilon}

\def\g{\gamma}

\def\om{\omega}
\def\r{\rho}
\def\s{\sigma}
\def\t{\theta}
\def\vp{\varphi}

\def\l{\lambda}

\def\Si{\Sigma}

\hyphenation{para-metri-sation}

\begin{document}

\title[Superalgebras associated to Riemann surfaces]
{Superalgebras associated to Riemann surfaces:
Jordan algebras of Krichever-Novikov type
}

\author{S{\'e}verine Leidwanger
and  Sophie Morier-Genoud}

\address{
S{\'e}verine Leidwanger,
Institut Math\'e\-ma\-tiques de Jussieu,
Th\'eorie des groupes,
Universit\'e Denis Diderot Paris VII,
case 7012,
75205 Paris Cedex 13, France}

\email{leidwang@math.jussieu.fr}

\address{
Sophie Morier-Genoud,
Institut Math\'e\-ma\-tiques de Jussieu,
UMR 7586,
Universit\'e Pierre et Marie Curie Paris VI,
4 place Jussieu, case 247,
75252 Paris Cedex 05, France}

\email{sophiemg@math.jussieu.fr}

\date{}


\begin{abstract}
We construct two superalgebras associated to a punctured Riemann surface.
One of them is a Lie superalgebra of Krichever-Novikov type,
the other one is a Jordan superalgebra.
The constructed algebras are related in several ways (algebraic, geometric, representation theoretic).
In particular, the Lie superalgebra is the algebra of derivations of the
Jordan superalgebra.
\end{abstract}

\maketitle

\section{Introduction}

In 1987, Krichever and Novikov \cite{KN1}, \cite{KN2}, \cite{KN3} introduced a family of Lie algebras generalizing the Virasoro algebra. 
Given a Riemann surface of arbitrary genus, 
the Krichever-Novikov algebra is the algebra of meromorphic vector fields on the surface which are holomorphic outside 
two distinguish fixed points. This algebra admits non-trivial central extensions.
The case where the Riemann surface is the sphere corresponds exactly to the Virasoro algebra.
Later, this definition has been extended to the case of graded Riemann surfaces, \cite{BMRR}, \cite{Bry1}, \cite{Bry2}, and also to the case of Riemann surfaces punctured by more than two points, 
\cite{Dic}, \cite{Schli2}.

In the present paper, we study two natural superalgebras, $\Lf_{KN}$ and $\Jc_{KN}$,
coming from punctured Riemann surfaces.
One of them, $\Lf_{KN}$, has a structure of Lie superalgebra. 
It is constructed from the natural action of the algebra of meromorphic vector fields
on the space of half densities. 
The other one, $\Jc_{KN}$, is a commutative superalgebra, which enters the class of Jordan superalgebra.
It is constructed from the natural action of the algebra of meromorphic functions
on the space of half densities.

One of the main notion used in the paper is that of \textit{Lie antialgebras},
introduced in 2007 by V.Ovsienko  \cite{Ovs}.
This class of algebras is a subclass of Jordan superalgebras.
Ovsienko explained how one can associate a Lie superalgebra to a Lie antialgebra
(the process is different from the one of Koecher-Kantor-Tits) and how the representations of these algebras are related.

It turns out that the algebra $\Jc_{KN}$ that we introduce is a Lie antialgebra.
Our first goal is to understand the relation between the algebras $\Lf_{KN}$ and $\Jc_{KN}$.
Theorem~\ref{KNlink} establishes two different links between the two algebras:
the first link within the framework
of Lie anti\-algebras, and a second geometric link in terms of algebras of derivations.
The next main result of the paper, Theorem \ref{goodrep}, 
provides a classification of representations of
$\Jc_{KN}$ araising from tensor densities modules of $\Lf_{KN}$.

Our running example is the case of the Riemann surface of genus 0 with 3 punctures.
It turns out that the algebra $\Jc_{KN}$ that we obtain in this case is similar to the ones considered in \cite{Zhe1}, \cite{Zhe2} as a new type of infinite dimensional Jordan superalgebra. Section \ref{algeb} and Theorem \ref{ThmSubalg} give an algebraic construction of the algebra $\Jc_{KN}$
leading to a connection with the work of \cite{Zhe1}, \cite{Zhe2}.


\section{Preliminary}\label{prem}

In this section, we recall briefly the main notions related to Lie superalgebras and Jordan superalgebras. 
We refer to \cite{Martinez} (and the references therein) for more general theory of these structures.
We also recall the definitions concerning Lie antialgebras \cite{Ovs}.

The algebras are considered over the field of complex numbers $\C$ 
(although most of the notions make sense over any field of characteristic not 2).
For a homogeneous element $v$ in a $\Z_2$-graded vector space $V=V_0\oplus V_1$ 
we denote $\bar{v}$ the degree of $v$, i.e.   $\bar{v}=i$ for $v\in V_i$.
In $\End(V_0\oplus V_1)$, the even elements, resp. odd elements,  are those morphisms belonging to 
$\End(V_0)\oplus \End(V_1)$, resp. $\Hom(V_0,V_1)\oplus\Hom(V_1,V_0)$.

\subsection{Lie superalgebras}
A \textit{Lie superalgebra} is a superspace $\Lf=\Lf_0 \oplus \Lf_1$ equipped with 
a bilinear operation $[\;,\;]:\Lf\times \Lf\rightarrow \Lf$ satisfying
\begin{enumerate}
\item[(SL1)] skewsymmetry: $[x, y]=-(-1)^{\bar{x}\bar{y}}\,[y, x],$
\item[(SL2)] super Jacoby identity :
$$(-1)^{\bar x \bar z}[[x,y],z]+(-1)^{\bar y \bar x}[[y,z],x]+(-1)^{\bar z \bar y}[[z,x],y]=0.$$
\end{enumerate}
\noindent
for all $x,y,z$ homogeneous elements in $ \Lf$.\\

\noindent
\textbf{Commutator.}
Given any  associative superalgebra $\Ab$,
a natural Lie superbracket on $\Ab$ is given by the commutator
$[\;,\;]$ defined by
$$
[A,B]=A B-(-1)^{\bar{A}\bar{B}}B A
$$
for homogeneous elements $A, B\in\Ab$ and extended by bilinearity on $\Ab\times\Ab$.\\

\noindent
\textbf{Representations of Lie superalgebras.}
A representation of a Lie superalgebra $(\Lf,[\;,\;])$ is a a superspace  $V=V_0\oplus V_1$ 
together with a linear map
$
\r: \Lf \rightarrow \End(V),
$
satisfying 
$$
\r([x,y])=[\r(x),\r(y)], 
$$
for all $x,y\in\Lf$.

\subsection{Jordan superalgebras}
A superalgebra $(\Jc=\Jc_0\oplus \Jc_1, \, \cdot\,)$ is a \textit{Jordan superalgebra} if the product satisfies
\begin{enumerate}
\item[(SJ1)] supercommutativity: $a\cdot b=(-1)^{\bar{a}\bar{b}}\,b\cdot a,$
\item[(SJ2)] super Jordan identity :
$$
\begin{array}{ll}
&(a\cdot b)\cdot(c\cdot d)+(-1)^{\bar{b}\bar{c}}\,(a\cdot c)\cdot(b\cdot d)+ (-1)^{(\bar{b}+\bar{c})\bar{d}}(a\cdot d)\cdot(b\cdot c)\\[8pt]
&\;=\;((a\cdot b)\cdot c)\cdot d +  (-1)^{(\bar{b}+\bar{c})\bar{d}+\bar{b}\bar{c}}((a\cdot d)\cdot c)\cdot b+(-1)^{(\bar{b}+\bar{c}+\bar{d})\bar{a}+\bar{c}\bar{d}}((b\cdot d)\cdot c)\cdot a,
\end{array}
$$
\end{enumerate}
\noindent
for all $a,b,c,d $ homogeneous elements in $ \Jc$.\\

\noindent
\textbf{Anticommutator.}
Given any  associative superalgebra $\Ab$,
a natural Jordan superbracket is given by the anticommutator $[\;,\;]_+$ defined by
$$
[A,B]_+=A B+(-1)^{\bar{A}\bar{B}}B A
$$
for homogeneous elements $A, B\in\Ab$ and extended by bilinearity on $\Ab\times\Ab$.\\

\noindent
\textbf{Representations of Jordan superalgebras.}
A representation of a Jordan superalgebra $(\Jc,\cdot\,)$ is a a superspace  $V=V_0\oplus V_1$ 
together with a linear map
$
\r: \Jc \rightarrow \End(V),
$
satisfying 
$$
\r(a\cdot b)=[\r(a),\r(b)]_+, 
$$
for all $a,b\in\Jc$. A faithful embedding of a Jordan algebra into an associative algebra equipped with the anticommutator is also called a \textit{specialization}.

\subsection{Lie antialgebras}
Lie antialgebras form a subclass of Jordan superalgebras in which the algebras satisfy
cubic identities (instead of the quartic identities defining Jordan algebras).
They were introduced in a geometric setting in \cite{Ovs}.
However the defining axioms of Lie antialgebras already appeared in \cite{Kap}, \cite{McC2}. 
Thanks to the "simplified" cubic identities, 
one can develop new objects and notions associated to these particular Jordan algebras
(a specific representation theory \cite{MG},\cite{LMG}, cohomology theory \cite{LO}). The most important object here will be the adjoint Lie superalgebra constructed in \cite{Ovs}, which is different from the one obtained by applying the Koecher-Kantor-Tits process.\\

\noindent
\textbf{Definition.}
A \textit{Lie antialgebra} is a superalgebra $\Ac=\Ac_0\oplus\Ac_1$
with a supercommutative product satisfying the following cubic identities:
\begin{enumerate}
\item[(LA0)] associativity of $\Ac_0$
$$x_1\cdot (x_2 \cdot x_3)=(x_1\cdot x_2)\cdot x_3,$$
for all $x_1, x_2, x_3 \in \Ac_0$,
\item[(LA1)] half-action
\begin{equation*}
x_1\cdot\left(x_2\cdot{}y\right)=
\half \left(x_1\cdot{}x_2\right)\cdot{}y,
\end{equation*}
for all $x_1,x_2\in\Ac_0$ and $y\in\Ac_1$, 

\item[(LA2)] Leibniz identity
\begin{equation*}\label{eq3}
x\cdot\left(y_1\cdot{}y_2\right)=
\left(x\cdot{}y_1\right)\cdot{}y_2+
y_1\cdot\left(x\cdot{}y_2\right),
\end{equation*}
for all $x\in\Ac_0$ and $y_1,y_2\in\Ac_1$, 

\item[(LA3)] odd Jacobi identity
\begin{equation*}\label{eq4}
y_1\cdot\left(y_2\cdot{}y_3\right)+
y_2\cdot\left(y_3\cdot{}y_1\right)+
y_3\cdot\left(y_1\cdot{}y_2\right)=0,
\end{equation*}
for all $y_1,y_2,y_3\in\Ac_1$.

\end{enumerate}

The fact that the axioms of Lie antialgebras imply those of Jordan superalgebras can be found in \cite{McC2} (see also \cite{LMG} for more details).

\medskip

\noindent
\textbf{Adjoint Lie superalgebra.}\label{ols}
Given a Lie antialgebra $\Ac$, the \textit{adjoint  Lie superalgebra} denoted by $\Of\lf\Sf(\Ac)$  is defined as follows.
As a vector space $\Of\lf\Sf(\Ac)=\Of\lf\Sf(\Ac)_0\oplus \Of\lf\Sf(\Ac)_1$, where
$$
\Of\lf\Sf(\Ac)_1:=\Ac_1, \quad \Of\lf\Sf(\Ac)_0:=\Ac_1 \otimes \Ac_1/ \Ic
$$
and $\Ic$ is the ideal generated by
$$\{a\otimes b-b\otimes a,\; ax\otimes b-a\otimes bx\,| \; a,b\in \Ac_1, x \in \Ac_0\}.$$
We denote by $a\odot b$ the image of $a\otimes b$ in $\Of\lf\Sf(\Ac)_0$. Therefore, we have the following useful relations in $\Of\lf\Sf(\Ac)_0$:
\begin{equation*}\label{relequiv}
\left\{
\begin{array}{rcl}
a\odot b& = &b\odot a,\\
ax\odot b& = &a\odot bx \;= \;b\odot ax\; =\;bx\odot a, \quad \;a,b\in \Ac_1, x \in \Ac_0 .\\
\end{array}
\right.
\end{equation*}

\noindent
The Lie superbracket on $\Of\lf\Sf(\Ac)$ is given by:
\begin{equation}\label{olsbrak}
\begin{array}{rcl}
[a,b]&=& a\odot b,\\[5pt]
[a\odot b, c]=-[c,a\odot b]&=&a(bc)+b(ac),\\[5pt]
[a\odot b, c\odot d]&=& 2\,a(bc)\odot d + 2\, b(ad)\odot c,
\end{array}
\end{equation}
where $a,b,c$ and $d$ are elements of $\Of\lf\Sf(\Ac)_1=\Ac_1$.

\medskip

\noindent
\textbf{Representations of Lie antialgebras.}
Since Lie antialgebras are particular Jordan superalgebras, 
we will be interested in particular Jordan representations.
We call \textit{LA-representation} of the Lie antialgebra $(\Ac,\cdot\,)$ any Jordan representation $(\r, V)$ satisfying the additional condition
$$
\r(a)\r(b)=\r(b)\r(a), \; \text{ for all even elements }\, a, b\in \Ac_0.
$$
An important feature of LA-representations is that they can be extended to representations of the adjoint Lie superalgebra, \cite{Ovs}. The converse is not true but some "good representations" of the Lie superalgebra give rise to LA-representations, see \cite{LMG} and also Theorem \ref{goodrep} below.

\begin{ex}\label{K1AK1}
(a) The first example of finite dimensional Lie antialgebra is the {\it tiny} Kaplansky superalgebra, often denoted\footnote{In the first version of \cite{Ovs} this algebra was denoted $\mathfrak{asl}_2$; this notation is used in \cite{MG}.}
$\Kc_3$. In this case the adjoint Lie superalgebra is the orthosymplectic algebra  $\mathfrak{osp}(1|2)$.

(b) An example of infinite dimensional Lie antialgebras, related to vector fields over the line, is the following algebra $\Ac\Kc(1)=\langle \e_n, n\in\Z\rangle\oplus\langle a_i, i\in\Z+\half\rangle$, satisfying
\begin{equation*}
\left\{
\begin{array}{rcl}
\e_n\cdot \e_m&=& \e_{n+m}\\[5pt]
\e_n\cdot a_i&=& \half a_{n+i}\\[5pt]
a_i\cdot a_j&=& \half (j-i) \e_{i+j}.\\
\end{array}
\right.
\end{equation*}
In this case the adjoint Lie superalgebra $\Of\lf\Sf(\Ac\Kc(1))$ is the Neveu-Schwarz superalgebra
$\Kf(1)=\langle L_n,n\in\Z\rangle\oplus\langle A_i, i\in\Z+\half\rangle$ in which
\begin{equation*}
\left\{
\begin{array}{lcl}
\left[
L_n,L_m
\right] &=&
\half \left(m-n\right)L_{n+m},\\[8pt]
\left[
L_n,A_j
\right] &=&\half
\left(i-\frac{n}{2}\right)A_{n+i},\\[8pt]
\left[
A_i,A_j
\right] &=&
\,L_{i+j}.
\end{array}
\right.
\end{equation*}
\end{ex}

\section{Geometric construction}\label{geom}
In this section, we define the superalgebras of Krichever-Novikov type associated to an arbitrary punctured Riemann surface and study their main properties. 
We stress on the case of the sphere with three punctures.

\subsection{Generalized Krichever-Novikov algebras}
Let $\Si$ be a compact Riemann surface of arbitrary genus~$g$, or equivalently, a smooth projective curve over $\C$.
Choose a set of $N$ distinct points $P=\{P_1,\ldots,P_N\}$, called \textit{punctures}, on $\Si$.
Denote $\Ab_{g,N}$ the associative algebra 
consisting of meromorphic functions 
on $\Si$ which are holomorphic outside the set of punctures with point-wise
multiplication.
The Krichever-Novikov algebra $\gf_{g,N}$
is the Lie algebra consisting of meromorphic vector fields on $\Si$
which are holomorphic outside of the set of punctures, with the usual Lie bracket of
vector fields expressed locally as
$$
[f,g]=\left[f(z)\ddz\;,\; g(z)\ddz\right]= \left(f(z)g'(z)-f'(z)g(z)\right) \ddz
$$

Both $\Ab_{g,N}$ and $\gf_{g,N}$ are infinite dimensional algebras. 
In the case of 2 punctures on the sphere the algebra $\gf_{g,N}$ is nothing but the Witt algebra.
The algebras, and their extensions, obtained in the case of 2 punctures in higher genus were introduced  and studied by 
Krichever and Novikov \cite{KN1}, \cite{KN2}, \cite{KN3}.

\subsection{Superalgebras of Krichever-Novikov type}
To the above geometric situation, one can associate a Lie superalgebra and a Jordan superalgebra (which is a Lie antialgebra).
Denote by $\F_{\l}$ the space of tensor densities of weight $\l$, $\l \in \Z\cup\half+\Z$
(in the sequel most of the time $\l$ will take the value $-1, -\half, 0$).
One has the following natural space identifications 
$$
\Ab_{g,N} \cong \F_0\;,\qquad \gf_{g,N}\cong \F_{-1}\; .
$$
The products of the algebras can be realized in the density modules.
More generally, consider the following bilinear operations, given in local coordinates:
$$
\begin{array}{lcll}
\bullet \;: &\F_{\l}\times \F_{\mu} &\longrightarrow &\F_{\l+\mu}\\[8pt]
&\left(f(z)(dz)^{\l},g(z)(dz)^{\mu}\right)&\mapsto &f(z)g(z)(dz)^{\l+\mu}
\end{array}
$$
and
$$
\begin{array}{lcll}
\{ \;,\;\} : &\F_{\l}\times \F_{\mu} &\longrightarrow &\F_{\l+\mu+1}\\[8pt]
&\left(f(z)(dz)^{\l},g(z)(dz)^{\mu}\right)&\mapsto &\big(\mu f'(z)g(z)-\l f(z)g'(z)\big)(dz)^{\l+\mu+1}.
\end{array}
$$
These operations endow the space $\oplus_\l \F_\l$ with a structure of Poisson algebra.

The algebras $\Ab_{g,N}$ and $\gf_{g,N}$ naturally act on $\F_{-\half}$. 
Furthermore, one can construct a structure of Lie superalgebra, resp. Jordan superalgebra, 
on the space $ \gf_{g,N}\oplus\F_{-\half}$, resp. $\Ab_{g,N}\oplus\F_{-\half}$.

\begin{defn}\label{lknakn}
(i) The space $\gf_{g,N}\oplus\F_{-\half}$ equipped with the bracket $[\;,\;]$ given in local coordinates by
\begin{equation}\label{LKN}
\begin{array}{rcl}
\left[f(z)(dz)^{-1}\;,\;g(z)(dz)^{-1}\right]&=&\left\{ f(z)(dz)^{-1}, g(z)(dz)^{-1}\right\}\\[8pt]
\left[f(z)(dz)^{-1}\;,\;\g(z)(dz)^{-\half}\right]&=&\left\{f(z)(dz)^{-1}, \g(z)(dz)^{-\half}\right\}\\[8pt]
\left[\vp(z)(dz)^{-\half}\;,\;\g(z)(dz)^{-\half}\right]&=&\half \,\vp(z)(dz)^{-\half}\bullet \g(z)(dz)^{-\half}
\end{array}
\end{equation}
is a Lie superalgebra. 
We call it Lie superalgebra of Krichever-Novikov type and denote $\Lf_{KN}$.

(ii) The space $\Ab_{g,N}\oplus\F_{-\half}$ 
equipped with the product $\circ $ given in local coordinates by
\begin{equation}\label{JKN}
\begin{array}{rcl}
f(z)\circ g(z)&=&f(z)\bullet  g(z)\\[8pt]
f(z)\circ \g(z)(dz)^{-\half}&=&\half\,f(z)\bullet \g(z)(dz)^{-\half}\\[8pt]
\vp(z)(dz)^{-\half}\circ \g(z)(dz)^{-\half}&=&\left\{\vp(z)(dz)^{-\half}, \g(z)(dz)^{-\half}\right\}
\end{array}
\end{equation}
is a Jordan superalgebra (which is also a Lie antialgebra). 
We call it Jordan superalgebra of Krichever-Novikov type and denote $\Jc_{KN}$.

\end{defn}

The fact that \eqref{LKN} defines a Lie superbracket is well known
(this can also be checked directly from the definitions).
The fact that $\Jc_{KN}$ is a Jordan superalgebra 
comes from a more general construction starting from an associative algebra $\Ab$ and a derivation $D$ on $\Ab$ (here $\Ab=\Ab_{g,N}$ and $D=(dz)^{-1}$ as an element of $\gf_{g,N}$), see \cite{McC2} or Section \ref{JsAD} for more details.
One can check by direct computation that \eqref{JKN} satisfies the axioms of Lie antialgebra.

\begin{rem}\label{unitalJKN}
Alternatively, a unital Jordan algebra (which is not a Lie antialgebra) can be defined by modifying the 
product in $\Jc_{KN}$ with
$$f(z)\circ \g(z)(dz)^{-\half}=\,f(z)\bullet \g(z)(dz)^{-\half},$$
in the second equation of \eqref{JKN}.
\end{rem}

\begin{ex}\label{2pts}
The first example is the case of two punctures on the sphere:
\begin{equation*}\label{Wittgeom}
\Si=\Pb^1(\C), \quad P=\{0,\infty\}.
\end{equation*}
For the algebra of meromorphic functions
one obtains
$\C[z, z^{-1}]$, the algebra of Laurent polynomials.
The vector field algebra is the famous Witt algebra
$\Wf$ generated by 
$$
L_n(z)=z^{n+1}\ddz,\ n\in\Z,
$$
satisfying:
$$
[L_n,L_m]=(m-n)\,L_{n+m}.
$$
In this case the associated Lie superalgebra and Jordan superalgebra defined in Definition~ \ref{lknakn} are
$$
\Lf_{KN}\simeq \Kf(1), \qquad \Jc_{KN}\simeq\Ac\Kc(1),
$$
where $\Kf(1)$  and $\Ac\Kc(1)$ are as in Example \ref{K1AK1}.

The relation between these two algebras was given in \cite{Ovs} in terms of contact vector fields on the supercircle.
\end{ex}

We come back to the general case and state our first result.
\begin{thm}\label{KNlink}
The Krichever-Novikov superalgebras are related by
\begin{eqnarray*}
(i)\quad
\Lf_{KN}&\cong& \Of\lf\Sf(\Jc_{KN}),\\
(ii) \quad
\Lf_{KN}&\cong& \Der(\Jc_{KN}).
\end{eqnarray*}
\end{thm}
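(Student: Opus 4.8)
The plan is to exhibit explicit isomorphisms in both cases, reducing everything to the Poisson calculus on density modules together with the structural fact that $\F_{-\half}$ is an invertible module over the function ring $\Ab_{g,N}$.

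For $(i)$ I would define a map $\Phi\colon\Of\lf\Sf(\Jc_{KN})\to\Lf_{KN}$ as the identity on the odd part $\Of\lf\Sf(\Jc_{KN})_1=\F_{-\half}=(\Lf_{KN})_1$, and on the even part by $\Phi(a\odot b)=\half\,a\bullet b\in\F_{-1}$ for $a,b\in\F_{-\half}$. First I would check that $\Phi$ is well defined, i.e.\ that it annihilates the ideal $\Ic$: supercommutativity of $\bullet$ handles $a\otimes b-b\otimes a$, and a one-line computation using $ax=\half\,x\bullet a$ for $x\in\F_0$ gives $\Phi(ax\odot b)=\tfrac14\,x\bullet a\bullet b=\Phi(a\odot bx)$. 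Then I would verify that $\Phi$ intertwines the brackets by comparing, term by term, the three defining relations of \eqref{olsbrak} with \eqref{LKN}. The relation $[a,b]=a\odot b$ holds by the very definition of $\Phi$, while the even-odd relation $[a\odot b,c]=a(bc)+b(ac)$ and the even-even relation $[a\odot b,c\odot d]=2a(bc)\odot d+2b(ad)\odot c$ reduce, after expanding the products $a(bc),b(ac),\dots$ with \eqref{JKN}, to the Leibniz rule for $\ddz$ and match the Poisson brackets $\{f(dz)^{-1},\g(dz)^{-\half}\}$ and $\{f(dz)^{-1},g(dz)^{-1}\}$ respectively. These identities are routine but should be displayed.

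It then remains to see that $\Phi$ is bijective, for which only the even part is at issue. Here I would use that $\F_{-\half}$ is a rank-one locally free (invertible) module over the coordinate ring $\Ab_{g,N}=\Oc(\Si\setminus P)$ of the smooth affine curve $\Si\setminus P$. The relation $ax\odot b=a\odot bx$ identifies $\Of\lf\Sf(\Jc_{KN})_0$ with $\F_{-\half}\otimes_{\Ab_{g,N}}\F_{-\half}$, and the symmetry relation makes it the symmetric square $\mathrm{Sym}^2_{\Ab_{g,N}}\F_{-\half}$; since $\F_{-\half}$ is invertible the flip acts as the identity, so this symmetric square is simply $\F_{-\half}\otimes_{\Ab_{g,N}}\F_{-\half}\cong\F_{-1}$. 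Under this chain of identifications $\Phi$ becomes a nonzero scalar multiple of the multiplication map $\mu\otimes\nu\mapsto\mu\bullet\nu$, hence an isomorphism onto $\F_{-1}=\gf_{g,N}=(\Lf_{KN})_0$, completing $(i)$.

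For $(ii)$ I would realise $\Lf_{KN}$ inside $\Der(\Jc_{KN})$ and then show the inclusion is onto. Define $\Psi\colon\Lf_{KN}\to\Der(\Jc_{KN})$ by letting a vector field $v\in\F_{-1}$ act by the Lie derivative $\{v,\,\cdot\,\}$, which preserves every $\F_\l$ and is a derivation of both $\bullet$ and $\{\,\cdot\,,\,\cdot\,\}$, hence of $\circ$; and by letting an odd $\vartheta\in\F_{-\half}$ act by the odd derivation $D_\vartheta$ with $D_\vartheta(\g(dz)^{-\half})=\{\vartheta,\g(dz)^{-\half}\}\in\F_0$ and $D_\vartheta(f)=-\half\,f\bullet\vartheta\in\F_{-\half}$ for $f\in\F_0$. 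One checks $D_\vartheta$ is a derivation of $\circ$ (note that the Poisson bracket alone does \emph{not} preserve $\Jc_{KN}$, which is exactly why the corrected action on $\F_0$ is needed), and that $\Psi$ is a homomorphism of Lie superalgebras. Injectivity is immediate: $\{v,\,\cdot\,\}=0$ on $\F_0$ forces $v=0$, and $D_\vartheta(f)=-\half\,f\bullet\vartheta=0$ for all $f$ forces $\vartheta=0$. The genuine obstacle is surjectivity. Given $D\in\Der(\Jc_{KN})$, split it into even and odd parts. The even part restricts to a derivation of the associative algebra $\F_0=\Ab_{g,N}$; since $\Si\setminus P$ is a smooth affine curve one has $\Der(\Ab_{g,N})\cong\gf_{g,N}$, so $D_0|_{\F_0}=\{v,\,\cdot\,\}$ for a unique $v\in\F_{-1}$, and compatibility with the odd product $\F_{-\half}\times\F_{-\half}\to\F_0$ forces $D_0|_{\F_{-\half}}=\{v,\,\cdot\,\}$ as well: the only ambiguity is an $\Ab_{g,N}$-linear endomorphism $c\cdot\mathrm{id}$ of the invertible module $\F_{-\half}$, and this is killed because $\{c\mu,\nu\}+\{\mu,c\nu\}$ is a nonzero multiple of $c$ which must vanish. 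For the odd part, the derivation rule on $\F_0\times\F_0$ gives $D_1(fg)=\half(g\bullet D_1(f)+f\bullet D_1(g))$, and putting $g=1$ yields $D_1(f)=f\bullet D_1(1)$; thus $D_1|_{\F_0}$ agrees with $D_\vartheta|_{\F_0}$ for $\vartheta=-2\,D_1(1)$, and the remaining odd derivation $E=D_1-D_\vartheta$ vanishes on $\F_0$. The Leibniz rule on $\F_0\times\F_{-\half}$ then gives $E(f\bullet\mu)=2f\bullet E(\mu)$, and setting $f=1$ shows $E=0$. Hence $\Psi$ is onto, and an isomorphism. The only non-formal inputs throughout are the two geometric facts about $\Si\setminus P$: that the derivations of its function ring are exactly the vector fields, and that $\F_{-\half}$ is an invertible module over that ring.
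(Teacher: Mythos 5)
Your argument is correct, and its skeleton is the same as the paper's: in part (i) you use (the inverse of) the very map the paper writes down, $f(dz)^{-1}\mapsto 2(dz)^{-\half}\odot f(dz)^{-\half}$, and in part (ii) you use the same embedding of $\Lf_{KN}$ into $\Der(\Jc_{KN})$ (up to sign conventions: your $\Psi(v)$ and $D_\vartheta$ are $-R_v$ and $-R_\vartheta$ in the paper's notation) together with the same key input $\Der(\Ab_{g,N})=\gf_{g,N}$. Where you genuinely diverge is in how the non-formal steps are carried out: you argue module-theoretically where the paper computes in a local coordinate. For surjectivity in (ii), the paper pivots on the distinguished element $dz^{-\half}$, setting $\d(z)\,dz^{-\half}=D(1\,dz^{-\half})$ in the even case and $\vp(z)\,dz^{-\half}=D(2)$ in the odd case, proving $\d=\half f'$ resp. $D(dz^{-\half})=\half\vp'$, and then evaluating $D$ on all of $\F_{-\half}$; you instead show that the residual even derivation vanishing on $\F_0$ is $\Ab_{g,N}$-linear on $\F_{-\half}$, hence multiplication by some $c\in\Ab_{g,N}$ by invertibility of $\F_{-\half}$, and that $c$ is killed because the odd--odd Leibniz rule forces $2\,c\bullet\{\mu,\nu\}=0$ for all $\mu,\nu$, while the residual odd derivation dies by your scaling identity $E=2E$. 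This buys real robustness: the paper's computation tacitly treats $dz^{-\half}$ as a globally defined element of $\F_{-\half}$, i.e. implicitly trivializes the half-density bundle over $\Si\setminus P$, which is harmless in genus zero but not automatic in higher genus, whereas your argument needs only that $\F_{-\half}$ is a rank-one projective $\Ab_{g,N}$-module. Likewise in (i), the paper merely exhibits the map and asserts it is an isomorphism, while you actually prove bijectivity of the even part via the identification $\Of\lf\Sf(\Jc_{KN})_0\cong\F_{-\half}\otimes_{\Ab_{g,N}}\F_{-\half}\cong\F_{-1}$ (your remark that the relation $ax\odot b=a\odot bx$, with $ax=\half\,x\bullet a$, spans the same subspace as $(x\bullet a)\otimes b-a\otimes(x\bullet b)$ is exactly the point that makes this legitimate). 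Two items you defer should be written out in a final version, though neither is a gap: the bracket verifications in (i), which do reduce to the stated Poisson identities (I checked the even--odd and even--even cases), and the two geometric facts you invoke, namely that $\F_{-\half}$ is invertible over $\Ab_{g,N}$ and that the multiplication map $\F_{-\half}\otimes_{\Ab_{g,N}}\F_{-\half}\to\F_{-1}$ is an isomorphism, both standard statements about line bundles on the smooth affine curve $\Si\setminus P$.
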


\begin{proof}
(i) Since $\Jc_{KN}$ is a Lie antialgebra,
one can apply the construction described in Section \ref{ols}.
The isomorphism between the Lie superalgebras  $\Lf_{KN}$ and
$\Of\lf\Sf(\Jc_{KN})$ is given by
$$
\begin{array}{rcl}
f(z)(dz)^{-\half}&\mapsto& f(z)(dz)^{-\half}\\[8pt]
f(z)(dz)^{-1}&\mapsto& 2(dz)^{-\half}\odot f(z)(dz)^{-\half}.
\end{array}
$$
(ii)
The algebra $\Der(\Jc_{KN})$ is the Lie subalgebra of
$(\End(\Jc_{KN}),[\,,\,])$ such that
any element $D\in \Der(\Jc_{KN})$ can be written as
$$
D=D_0+D_1,
$$
where $D_0$ and $D_1$ are even and odd endomorphisms, respectively,
satisfying
$$
D_i(A\circ B)=D_i(A)\circ B+(-1)^{i\bar A}A\circ D_i(B)\;,
$$
for all homogeneous elements $A,B$ in $\Jc_{KN}$ (recall that $\circ$
is the product on $\Jc_{KN}$ that is defined using the operation $\bullet$ and $\{\,,\,\}$
according to the parity of the elements).

One can naturally embed $\Lf_{KN}$ into $\Der(\Jc_{KN})$. Indeed,
for any even element $f\in \Lf_{KN}$, i.e. $f\in \F_{-1}$, and any odd
element
$\vp\in \Lf_{KN}$, i.e. $\vp\in \F_{-\half}$, define endomorphisms of
$\Jc_{KN}$ by
$$
\left\{
\begin{array}{rcll}
R_f(a)&=& \{a, f\}, &\forall \;a\in \Ab_{g,N}\\
R_f(\om )&=& \{\om, f\}, &\forall \; \om\in \F_{-\half}\\
\end{array}
\right.
,
\quad
\left\{
\begin{array}{rcll}
R_\vp(a)&=& \half  a\bullet \vp, &\forall \; a\in \Ab_{g,N}\\
R_\vp(\om )&=& \{\om, \vp\}, &\forall \; \om\in \F_{-\half}\\
\end{array}
\right.
.
$$
One can easily see that $R_f$ and $R_\vp$ are elements of $\Der(\Jc_{KN})$
(this also can be deduced from a more general statement, Lemma 3.2 in
\cite{Ovs}).
Let us show that every element in $\Der(\Jc_{KN})$ is of this form.

Case (a): Consider an even derivation $D$ in $\Der(\Jc_{KN})$.
The restriction of $D$ to $ \Ab_{g,N}$ is an element of
$\Der(\Ab_{g,N})$.
It is well known that $\gf_{g,N}=\Der(\Ab_{g,N})$, through the natural
right action.
Thus, there exists $f\in \gf_{g,N}$ such that
$$
D(a)=\{a, f\}, \quad \forall\; a\in  \Ab_{g,N}.
$$
In the sequel, the computations are made using a local coordinate $z$, but
to simplify the notation we often drop off the variable.
Introduce
$$
\d(z)(dz)^{-\half}=D\left(1\, dz^{-\half}\right),
$$
and let us show that $\d(z)=\half f'(z)$.
Using the property of derivation, we can write for all $\vp$
$$
D\left(\left\{\vp\, dz^{-\half}\,,\, dz^{-\half}\right\}\right)=
\left\{D\left(\vp\, dz^{-\half}\right)\,,\, dz^{-\half}\right\}
+\left\{\vp\, dz^{-\half}\,,\, D\left(\, dz^{-\half}\right)\right\}.
$$
In the above equality,
\begin{eqnarray*}
\text{LHS}&=&\textstyle D\left(-\half \vp'\right)=\half \vp''f,\\[6pt]
\text{RHS}&=&\left\{D\left(\vp\right)\, dz^{-\half}+\vp
\d\, dz^{-\half}\,,\,\, dz^{-\half}\right\}
+\left\{\vp\, dz^{-\half}\,,\,\d\, dz^{-\half}\right\}\\[6pt]
&=&\left\{(-\vp'f+\vp \d)\, dz^{-\half}\, ,\, dz^{-\half}\right\}
+\left\{\vp\, dz^{-\half}\,,\,\d\, dz^{-\half}\right\}\\[6pt]
&=&\half \vp''f+\half \vp'f'-\vp'\d.
\end{eqnarray*}
Since the equality holds for all $\vp$, we deduce $\d(z)=\half f'(z)$.

Now, we compute for all $\om\, dz^{-\half}\in \F_{-\half}$,
\begin{eqnarray*}
D\left(\om\, dz^{-\half}\right)\;=\;D\left(2\om\circ \, dz^{-\half}\right)&=&
D\left(2\om\right)\circ \, dz^{-\half}+2\om \circ D\left(\, dz^{-\half}\right)\\[4pt]
&=&
\left\{\om , f\right\}\, dz^{-\half}+\om D\left(\, dz^{-\half}\right)\\[4pt]
&=&-\om'f\, dz^{-\half}+\half \om f'\, dz^{-\half}\\[4pt]
&=&\left\{\om\, dz^{-\half}, f(dz)^{-1}\right\}.
\end{eqnarray*}
We have proved in the case of even derivation that $D=R_f$.

Case (b): Consider an odd derivation $D$ in $\Der(\Jc_{KN})$.
Introduce
$$
\vp(z)\, dz^{-\half}:=D(2),
$$
and let us show that $D\left(dz^{-\half}\right)=\half \vp'(z)$.
Writing
\begin{eqnarray*}
D\left(dz^{-\half}\right)=
D\left(2\circ dz^{-\half} \right)
&=&
D(2)\circ dz^{-\half}+2\circ D(dz^{-\half})
\\[4pt]
&=&\left\{\vp dz^{-\half}, dz^{-\half}\right\}+2D(dz^{-\half})\\[4pt]
&=&-\half\vp'+2D(dz^{-\half}),
\end{eqnarray*}
we deduce  $D\left(dz^{-\half}\right)=\half \vp'(z)$.

Now, it is easy to compute
$$
D(a)=\half a\bullet \vp dz^{-\half}, \quad \forall \; a\in  \Ab_{g,N},
$$
and
$$
D(\om dz^{-\half})=\left\{dz^{-\half}, \vp dz^{-\half}\right\}, \quad \forall\; \om
\in\F_{-\half}.
$$
Consequently, one has $D=R_\vp$.
\end{proof}

\begin{rem}
In general, given a Lie antialgebra $\Ac$ one always has
an action, by right multiplication, of $\Of\lf\Sf(\Ac)$ on $\Ac$, i.e.
an inclusion $\Of\lf\Sf(\Ac)\hookrightarrow \Der(\Ac)$, but it is not
necessarily an isomorphism, \cite{Ovs}.
Isomorphisms were established in the cases $\Ac=\Kc_3$ and $\Ac=\Ac\Kc(1)$.
Theorem \ref{KNlink} enlarges the class of Lie antialgebras for which one
has the identification $\Of\lf\Sf(\Ac)\cong\Der(\Ac)$.
\end{rem}

\subsection{Representations}\label{RepSec}
An important result in the representation theory of Lie antialgebras \cite{Ovs}
is the fact that any LA-representation of a Lie antialgebra $\Ac$
generates a representation of the Lie superalgebra $\Of\lf\Sf(\Ac)$.
The converse is in general not true. 
However, it is surprizing that in some cases the action of the odd elements
of $\Of\lf\Sf(\Ac)$ considered with the antcommutator $[\, ,\,]_+$ generate a representation of $\Ac$.
This feature is developped in this section.

Consider the vector superspace $\Vc_\l=\F_\l\oplus\F_{\l+\half}$, where $\l\in \Z\cup \half+\Z$. 
The elements of  $\Vc_\l$ belonging to $\F_\l$, resp. $\F_{\l+\half}$, are considered as even, resp. odd.
We give natural actions of the algebras $\Lf_{KN}$ and $\Jc_{KN}$ on $\Vc_\l$.

Define the linear map 
$\widetilde{\r}: \Lf_{KN}=\gf_{g,N}\oplus\F_{-\half}\rightarrow \End(\Vc_\l)$ by
\begin{equation}
\widetilde{\r}_{f \choose\vp}
\left(\begin{array}{c}v\\[8pt]
 \om\end{array}\right)=
\left(
\begin{array}{lcl}
\{f, v\}&+&\half \vp \bullet \om\\[8pt]
\{f, \om\}& +& \{\vp, v\}
\end{array}
\right)
\end{equation}
where $f\in\gf_{g,N},\; \vp\in\F_{-\half}, \;v\in\F_\l,\; \om \in\F_{\l+\half}$, and 
define the linear map $\r: \Jc_{KN}=\Ab_{g,N}\oplus\F_{-\half}\rightarrow \End(\Vc_\l)$ by
\begin{equation}
\r_{f \choose\vp}
\left(\begin{array}{c}v\\[8pt]
\om\end{array}\right)=
\left(
\begin{array}{ccc}
\l f\bullet v&+&\half \vp \bullet \om\\[8pt]
(\half-\l)f\bullet \om& +& \{\vp , v\}
\end{array}
\right)
\end{equation}
where $f\in\Ab_{g,N},\; \vp\in\F_{-\half}, \;v\in\F_\l,\; \om \in\F_{\l+\half}$.

Note that one has
$$
\widetilde{\r}_{\vert \F_{-\half}}=\r_{\vert \F_{-\half}}.
$$

\begin{thm}\label{goodrep}
(i) The map $\widetilde{\r}$ is a faithful representation of the Krichever-Novikov Lie superalgebra 
$\Lf_{KN}$ for any value of $\l$,

(ii) The map $\r$ is a faithful (LA-)representation of the Krichever-Novikov Jordan superalgebra $\Jc_{KN}$ if and only if
$\l=0$ or $\half$.
\end{thm}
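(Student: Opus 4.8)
The plan is to verify both representation identities by splitting into the three parity cases (even--even, even--odd, odd--odd) and reducing everything to the Poisson-algebra relations satisfied by $\bullet$ and $\{\,,\,\}$ on $\oplus_\l\F_\l$.

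For part (i), I would first observe that the even block of $\widetilde{\r}$, namely the action of $f\in\gf_{g,N}=\F_{-1}$ given by $v\mapsto\{f,v\}$ and $\om\mapsto\{f,\om\}$, is exactly the standard Lie-derivative action of meromorphic vector fields on tensor densities; this is a Lie algebra representation for every $\l$, so the even--even bracket relation $\widetilde{\r}(\{f,g\})=[\widetilde{\r}(f),\widetilde{\r}(g)]$ is immediate. The even--odd relation (for $f\in\F_{-1}$ and $\vp\in\F_{-\half}$) and the odd--odd relation (for $\vp,\psi\in\F_{-\half}$) I would then check by a direct local-coordinate computation, using that $\{\,,\,\}$ is a derivation of $\bullet$ together with the weight bookkeeping built into the two brackets. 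In the odd--odd case one must remember that the super-commutator $[\widetilde{\r}(\vp),\widetilde{\r}(\psi)]$ of two odd operators is the anticommutator $\widetilde{\r}(\vp)\widetilde{\r}(\psi)+\widetilde{\r}(\psi)\widetilde{\r}(\vp)$, which should match $\widetilde{\r}(\half\vp\bullet\psi)$. Faithfulness follows by evaluating $\widetilde{\r}_{f\choose\vp}$ on vectors of the form $(v,0)$: the condition $\{f,v\}=0$ for all $v$ forces $f=0$ since vector fields act faithfully on densities, and then $\{\vp,v\}=0$ for all $v$ forces $\vp=0$.

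For part (ii), the decisive observation is that every even element $f\in\Ab_{g,N}$ of $\Jc_{KN}$ acts through $\r$ as a multiplication operator, namely by $\l f$ on the $\F_\l$-component and by $(\half-\l)f$ on the $\F_{\l+\half}$-component. Two such operators automatically commute, so the extra LA-condition $\r(a)\r(b)=\r(b)\r(a)$ for even $a,b$ holds for every $\l$, and the only real content is the Jordan identity $\r(a\circ b)=[\r(a),\r(b)]_+$. Testing this on two even elements $f,g$, where $f\circ g=f\bullet g$, the even block reduces to $\l=2\l^2$ and the odd block to $(\half-\l)=2(\half-\l)^2$, each equivalent to $\l(2\l-1)=0$; this is precisely where $\l=0$ or $\l=\half$ is forced, giving the ``only if'' direction. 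For the ``if'' direction I would then check that the even--odd and odd--odd Jordan relations hold for every $\l$ (again by direct computation, noting that for two odd operators the anticommutator $[\,,\,]_+$ is in fact the commutator, which is why the odd--odd block imposes no constraint), so that for $\l\in\{0,\half\}$ all Jordan relations together with the LA-condition are satisfied. Faithfulness is checked by evaluating on suitable even and odd test vectors, using the nondegeneracy of $\bullet$ as in part (i).

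The main obstacle is purely computational: the even--odd and odd--odd identities in both parts require careful tracking of the coefficients produced by $\{\,,\,\}$ acting across products in $\bullet$, using the Leibniz rule and the weight shifts. Conceptually, however, the crux is the short even--even computation in part (ii), since it is there---and only there---that the quadratic relations single out the two admissible weights $\l=0,\half$. As an alternative, cleaner route for the ``if'' direction of (ii), one could invoke the general correspondence between LA-representations and representations of the adjoint Lie superalgebra together with $\widetilde{\r}_{\vert\F_{-\half}}=\r_{\vert\F_{-\half}}$ and the isomorphism $\Lf_{KN}\cong\Of\lf\Sf(\Jc_{KN})$ of Theorem~\ref{KNlink}, but matching the even parts still amounts to the same constraint $\l\in\{0,\half\}$.
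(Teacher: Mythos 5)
Your proposal is correct and follows essentially the same route as the paper: a direct verification in which the identities involving odd elements hold for all $\l$, and the even--even Jordan identity $[\r(f),\r(g)]_+=\r(f\circ g)$ forces the quadratic conditions $\l=2\l^2$ and $\half-\l=2(\half-\l)^2$, hence $\l\in\{0,\half\}$. The paper's proof is in fact terser than yours---it simply asserts these computations and omits both the faithfulness check and the LA-condition (commutation of the even multiplication operators), which you supply correctly.
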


\begin{proof}
Point (i) is a classical fact.
Point (ii) can be established by direct computations.
Indeed, one can check that the identities
$$
[\r(\vp),\r(\g)]_+=\r(\vp\circ \g),\qquad [\r(f),\r(\vp)]_+=\r(f\circ \vp),
$$
are always satisfied for any odd elements $\vp,\g$ and even element $f$ in $\Jc_{KN}$.
Whereas the identity involving two even elements 
$$
[\r(f),\r(g)]_+=\r(f\circ g),
$$
is satisfied if and only if $\l=0$ or $\half$.
\end{proof}

\begin{rem}
In other words, Theorem 2 implies that the actions of odd elements of $\Lf_{KN}$ on $\Vc_\l$
generate a Jordan subalgebra of $\left(\End(\Vc_\l), [\;,\;]_+\right)$, for $\l=0,\half$, isomorphic to $\Jc_{KN}$.
\end{rem}

\subsection{The case of three punctures  on the sphere}\label{SecThmSubalg}
Consider the three point situation in genus 0:
\begin{equation*}\label{KNgeom}
\Si=\Pb^1(\C), \quad P=\{\a,-\a, \infty\},
\end{equation*}
where $\a\in\C\setminus\{0\}$. This case has been studied in \cite{Schli}, \cite{FS}, \cite{FS2}.

Note that the moduli space $\mathcal{M}_{0,3}$ is trivial so that the constructions do not depend on the choice of $\a$.

The corresponding function algebra $\Ab_{0,3}$ has basis  $\{G_n,\; n\in \Z\}$, where the functions are locally defined by
$$
G_{2k}(z)= (z-\a)^k(z+\a)^k, \qquad G_{2k+1}(z)= z(z-\a)^{k}(z+\a)^{k},
$$
and satisfying
\begin{equation}\label{FKN}
G_n \;G_m=\left\{
\begin{array}{ll}G_{n+m}+\a^2G_{n+m-2},&n, m \text{ odd,}\\[5pt]
G_{n+m},& \text{ otherwise. }\\
\end{array}
\right.
\end{equation}
The algebra of vector fields $\gf_{0,3}$ has basis $\{V_n\}_{n\in \Z}$, where
$$
V_{2k}(z)= z(z-\a)^k(z+\a)^k\ddz, \qquad V_{2k+1}(z)= (z-\a)^{k+1}(z+\a)^{k+1}\ddz,
$$
satisfying the relation
\begin{equation}\label{KN}
[V_n\; , \;V_m]=\left\{
\begin{array}{ll}(m-n)V_{n+m},&n,m \text{ odd, }\\[5pt]
(m-n)V_{n+m}+(m-n-1)\a^2V_{n+m-2},&n \text{ odd},\; m \text{ even,}\\[5pt]
(m-n)(V_{n+m}+\a^2V_{n+m-2}),&n,m \text{ even.}\\[5pt]
\end{array}
\right.
\end{equation}

The next proposition gives the description in terms of generators and relations, of the superalgebras of Krichever-Novikov type obtained in the particular case of three punctured sphere.

\begin{prop}\label{JKNLKN}
(i) The Lie superalgebra of Krichever-Novikov type, $\Lf_{0,3}=\gf_{0,3} \oplus \F_{-\half}$, has even basis vectors $V_n$, $n\in \Z$,
and odd basis vectors $\vp_i$, $i\in \Z+\half$, satisfying the relations \eqref{KN} and
\begin{equation*}
\begin{array}{lcl}
[V_n\; , \;\vp_i]&=&\left\{\begin{array}{lll}
(i-\frac{n}{2})\vp_{n+i},&n \text{ odd},&i-\half \text{ odd},\\[5pt]
(i-\frac{n}{2})\vp_{n+i}+(i-\frac{n}{2}-1)\a^2\vp_{n+i-2},&n \text{ odd},&i-\half \text{ even},\\[5pt]
(i-\frac{n}{2})\vp_{n+i}+(i-\frac{n}{2}+\half)\a^2\vp_{n+i-2},&n \text{ even},&i-\half \text{ odd},\\[5pt]
(i-\frac{n}{2})\vp_{n+i}+(i-\frac{n}{2}-\half)\a^2\vp_{n+i-2},&n \text{ even},&i-\half \text{ even},\\[8pt]
\end{array}\right.\\[20pt]
[\vp_i\; , \;\vp_j]&=&\left\{\begin{array}{ll}
V_{i+j}+\a^2V_{i+j-2},&i-\half\text{ even }, \,j-\half \text{ even,}\\[5pt]
V_{i+j}, & \text{ otherwise}.\\[5pt]
\end{array}
\right.
\end{array}
\end{equation*}
(ii) The Jordan superalgebra of Krichever-Novikov type, $\Jc_{0,3}=\Ab_{0,3} \oplus \F_{-\half}$ has even basis vectors $G_n$, $n\in \Z$,
and odd basis vectors $\vp_i$, $i\in \Z+\half$, satisfying the relations \eqref{FKN} and
\begin{equation*}
\begin{array}{lcl}
G_n\; \circ \;\vp_i&=&\left\{\begin{array}{lll}
\half \vp_{n+i},&n \text{ even or}&i-\half \text{ odd},\\[5pt]
\half(\vp_{n+i}+\a^2\vp_{n+i-2}),&n \text{ odd and}&i-\half \text{ even},\\[8pt]
\end{array}\right.\\[20pt]
\vp_i\; \circ  \;\vp_j&=&\left\{\begin{array}{lll}
(j-i)G_{i+j},&i-\half \text{ odd},& j-\half \text{ odd},\\[5pt]
(j-i)G_{i+j}+(j-i+1)\a^2G_{i+j-2},&i-\half \text{ even},& j-\half \text{ odd},\\[5pt]
(j-i)(G_{i+j}+\a^2G_{i+j-2}),&i-\half \text{ even},& j-\half \text{ even}.\\[5pt]
\end{array}
\right.
\end{array}
\end{equation*}
\end{prop}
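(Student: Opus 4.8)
The plan is to make the whole computation explicit: fix bases of the three ingredient spaces $\Ab_{0,3}$, $\gf_{0,3}$ and $\F_{-\half}$, and then simply substitute them into the three defining products of \eqref{JKN} and of \eqref{LKN}, reducing each output back to the chosen basis. The bases $\{G_n\}$ of $\Ab_{0,3}$ and $\{V_n\}$ of $\gf_{0,3}$, together with their products \eqref{FKN} and \eqref{KN}, are already available; I would also record the observation $V_n=G_{n+1}(dz)^{-1}$, which follows at once by comparing the two displayed formulas for $V_{2k},V_{2k+1}$ and $G_{2k+1},G_{2k+2}$. The one object still to be pinned down is a basis of the half-density space. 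Since a meromorphic half-density on $\Pb^1(\C)$ holomorphic off $\{\a,-\a,\infty\}$ is exactly $h(z)(dz)^{-\half}$ with $h\in\Ab_{0,3}$, I would set $\vp_i$ to be a suitable constant multiple of $G_{i+\half}(dz)^{-\half}$ for $i\in\Z+\half$, which is manifestly a basis of $\F_{-\half}$. The index shift $i\mapsto i+\half$ (rather than $i-\half$) is the point that makes the simple, correction-free cases of the statement come out right, and the overall scalar is then fixed by requiring the odd--odd products to have the integral coefficients listed (the even--odd products $G_n\circ\vp_i$ carry their factor $\half$ regardless of this scalar).

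Then I would compute the four remaining families of structure constants in parallel. For the Jordan side, $G_n\circ\vp_i=\half\,G_n\bullet\vp_i$ by the second line of \eqref{JKN}, so this reduces to the pointwise product $G_nG_{i+\half}$, rewritten by \eqref{FKN}; the single parity branch in \eqref{FKN} (the $\a^2$-term appears only when both indices are odd) is exactly what splits the answer into the two listed cases. The odd--odd product $\vp_i\circ\vp_j=\{\vp_i,\vp_j\}$ is, by the local formula for $\{\,,\,\}$ with $\lambda=\mu=-\half$, the Wronskian $\tfrac12\big(G_{i+\half}\,G_{j+\half}'-G_{i+\half}'\,G_{j+\half}\big)$; expanding this in the basis $\{G_n\}$ produces the antisymmetric factor $(j-i)$ together with the $\a^2$-corrections. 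The Lie side is entirely analogous with the roles of the two Poisson operations exchanged: $[\vp_i,\vp_j]=\half\,\vp_i\bullet\vp_j=\half\,G_{i+\half}G_{j+\half}(dz)^{-1}$ reduces to \eqref{FKN} and to $V_n=G_{n+1}(dz)^{-1}$, while $[V_n,\vp_i]=\{V_n,\vp_i\}$ is a weight-$(-\half)$ density computation whose leading coefficient is the expected $(i-\tfrac n2)$.

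The main obstacle is purely combinatorial bookkeeping rather than any conceptual difficulty. Because the product \eqref{FKN} branches on the parities of its two indices, and because taking a derivative flips parity, the $\a^2$-corrections propagate unevenly through the computations; this is what forces the four-way case split in $[V_n,\vp_i]$ (where both the parity of $n$ and the parity of $i-\half$ intervene) and the three-way split in $\vp_i\circ\vp_j$. I would organize the work around the single algebraic identity $G_1^2=G_2+\a^2G_0$, i.e.\ $z^2=(z^2-\a^2)+\a^2$, which is the source of every correction term, and then dispatch the even/odd sub-cases of each family one at a time, each reducing to a short manipulation in $\C[z,(z^2-\a^2)^{-1}]$. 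Closure, namely that each output lands in the span of the chosen basis, is automatic since $\Ab_{0,3}$ and $\F_{-\half}$ are stable under the operations $\bullet$ and $\{\,,\,\}$ involved.
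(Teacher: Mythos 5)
Your proposal is correct and follows essentially the same route as the paper: the paper's proof is exactly a direct computation in the basis $\vp_i=\sqrt{2}\,G_{i+\half}(dz)^{-\half}$ (written out as $\vp_{2k+\half}=\sqrt{2}z(z-\a)^k(z+\a)^k(dz)^{-\half}$, $\vp_{2k-\half}=\sqrt{2}(z-\a)^k(z+\a)^k(dz)^{-\half}$), which is precisely your choice of basis with the scalar you describe fixing to $\sqrt{2}$. Your additional bookkeeping ($V_n=G_{n+1}(dz)^{-1}$, the identity $z^2=(z^2-\a^2)+\a^2$ as the source of all $\a^2$-corrections) is a correct and helpful organization of the same computation.
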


\begin{proof}
This can be established by direct computations using  the following notation
$$\vp_{2k+\half}=\sqrt{2}z(z-\a)^k(z+\a)^k(dz)^{-\half}\;\;,\quad 
\vp_{2k-\half}=\sqrt{2}(z-\a)^{k}(z+\a)^{k}(dz)^{-\half}.
$$
to express locally the elements of the density space $\F_{-\half}$.
\end{proof}

\subsection{Embeddings $\Kf(1)\subset \Lf_{0,3}$ and $\Ac\Kc(1)\subset \Jc_{0,3}$}
One can naturally recover the algebras obtained in the case of two punctures inside the ones obtained from three punctures. 
This corresponds to restriction of the set of labeling integers in the presentation of 
 $\Lf_{0,3}$ and $\Jc_{0,3}$
to nonpositive integers, so that one only keeps the functions and vector fields which are holomorphic at infinity.

\begin{prop}\label{isoWitt}
(i) The subalgebra $\Lf_{0,3}^-:=\langle V_n, n\leq 0;\; \vp_i, i\leq \half \rangle$ of $\Lf_{0,3}$ 
is isomorphic to $\Kf(1)$.

(ii) The subalgebra $\Jc_{0,3}^-:=\langle G_n, n\leq 0;\; \vp_i, i\leq \half \rangle$ of $\Jc_{0,3}$ 
is isomorphic to $\Ac\Kc(1)$.
\end{prop}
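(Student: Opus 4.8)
The plan is to recognize $\Jc_{0,3}^-$ and $\Lf_{0,3}^-$ geometrically: imposing holomorphy at $\infty$ forbids poles there and leaves only the two punctures $\a,-\a$, so these are precisely the Krichever--Novikov superalgebras attached to the two-punctured sphere $(\Pb^1,\{\a,-\a\})$. The structural fact I would exploit is that both products $\circ$ and $[\,,\,]$ are built from the operations $\bullet$ and $\{\,,\,\}$ on tensor densities, which are intrinsic (coordinate-free). Hence I may compute them in any local coordinate, and a M\"obius change of coordinate will turn the $\a^2$-corrected structure constants of \eqref{FKN}, \eqref{KN} and Proposition~\ref{JKNLKN} into the clean relations of $\Ac\Kc(1)$ and $\Kf(1)$.

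First I would introduce the coordinate $w=\frac{z-\a}{z+\a}$, which sends $\a\mapsto0$, $-\a\mapsto\infty$ and $\infty\mapsto1$, so that functions holomorphic on $\Pb^1\setminus\{\a,-\a\}$ and at $\infty$ become exactly the Laurent polynomials $\C[w,w^{-1}]$. In this coordinate I set $\e_n:=w^{n}$ and $a_i:=w^{\,i+\half}(dw)^{-\half}$ for $n\in\Z$, $i\in\Z+\half$. Using $f\circ\g(dw)^{-\half}=\half f\,\g(dw)^{-\half}$ and $\{\vp(dw)^{-\half},\g(dw)^{-\half}\}=\half(\vp\g'-\vp'\g)$, a one-line computation yields $\e_n\circ\e_m=\e_{n+m}$, $\e_n\circ a_i=\half a_{n+i}$ and $a_i\circ a_j=\half(j-i)\e_{i+j}$, i.e.\ exactly the defining relations of $\Ac\Kc(1)$ from Example~\ref{K1AK1}(b).

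The main step is then to prove that $\{\e_n\}_{n}$ and $\{a_i\}_{i}$ span the same spaces as the given generators, namely the functions and half-densities holomorphic at $\infty$ with poles only at $\pm\a$. For the even part this is the identity between $\C[w,w^{-1}]$ and the span of $\{G_n\}_{n\le0}$: a filtration-by-pole-order argument at $\a$ and $-\a$ shows both realise all functions with a prescribed bound on the pole orders, the leading polar parts matching triangularly. The same pole-order count identifies the span of $\{a_i\}$ with that of $\{\vp_i\}_{i\le\half}$ (here the explicit coefficients $\sqrt2\,(z^2-\a^2)^k$ together with the transformation factor $(dw/dz)^{-\half}=\frac{z+\a}{\sqrt{2\a}}$ account for the $\sqrt2$ normalisation). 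Granting these two span identities, $\{\e_n,a_i\}$ is simply another basis of $\Jc_{0,3}^-$ in which the product is that of $\Ac\Kc(1)$, proving (ii). I expect this span/filtration bookkeeping to be the only genuine obstacle; everything else is formal.

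For (i) I would not recompute but deduce it from (ii) functorially. Since $\Jc_{0,3}^-\cong\Ac\Kc(1)$ as Lie antialgebras, applying the construction of Section~\ref{ols} and Theorem~\ref{KNlink}(i) gives $\Lf_{0,3}^-\cong\Of\lf\Sf(\Jc_{0,3}^-)\cong\Of\lf\Sf(\Ac\Kc(1))=\Kf(1)$, the last equality being Example~\ref{K1AK1}(b). The point to check is that $\Of\lf\Sf(\Jc_{0,3}^-)$ is realised inside $\Lf_{0,3}$ as $\Lf_{0,3}^-$: the odd parts agree tautologically, and under the isomorphism $f(dz)^{-1}\mapsto2\,(dz)^{-\half}\odot f(dz)^{-\half}$ the even generators $\vp_i\odot\vp_j$ go to the brackets $[\vp_i,\vp_j]$ computed in Proposition~\ref{JKNLKN}(i); a pole-order count at $\pm\a$ again shows these exhaust exactly the vector fields holomorphic at $\infty$. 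Alternatively one runs the $w$-coordinate computation directly with $L_n:=w^{\,n+1}\ddz$ and suitable half-densities, recovering $\Kf(1)$; the extra factor $\half$ in $[L_n,L_m]$ and the shift in the holomorphy threshold for weight $-1$ densities come out automatically from the transformation law of $(dz)^{-1}$.
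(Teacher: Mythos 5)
Your proposal is correct and follows essentially the same route as the paper: the paper's (very terse) proof is exactly the M\"obius change of coordinates $\om=\frac{z-\a}{z+\a}$, viewing $\Jc_{0,3}^-$ and $\Lf_{0,3}^-$ as the Krichever--Novikov superalgebras of the two-punctured sphere, with the direct generator identification (your $a_{-\half}=\frac{z+\a}{\sqrt{2\a}}(dz)^{-\half}$ matches the paper's formula) offered as an equivalent alternative. Your write-up merely supplies the details the paper leaves implicit --- the coordinate-independence of $\bullet$ and $\{\,,\,\}$, the span/pole-order bookkeeping, and the weight-$(-1)$ holomorphy shift that makes $V_1\in\Lf_{0,3}^-$ --- and these are all sound.
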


\begin{proof}
Point (i) and (ii) can be viewed geometrically using the following change in coordinates
$$\om=\frac{z-\a}{z+\a}.$$
Equivalently, the isomorphisms can be established using direct identification between the generators, 
like the following for case (ii):
$$
\e_{-1}=G_0+2\a G_{-1}+2\a^2 G_{-2},
\;
\e_{1}=G_0-2\a G_{-1}+2\a^2 G_{-2},
\;
 a_{-\half}=\textstyle\frac{1}{2\sqrt{\a}}( \vp_{\half}+\a \vp_{-\half}).$$
 \end{proof}

\section{Algebraic construction}\label{algeb}

In this section, we recover the superalgebras of Krichever-Novikov type described in
Section \ref{SecThmSubalg} in a purely algebraic way. 
It turns out that the construction is related to that of \cite{Zhe1}, \cite{Zhe2}.

\subsection{Doubling process}\label{JsAD}
We consider Jordan superalgebras of infinite dimension
which can be obtained using the following algebraic construction.
Let $A$ be a commutative associative complex algebra with unit
and $D$ be a derivation on $A$.
Consider the space  $\Jc_\s(A,D)=A\oplus \eta A$, where $\eta A$ is an isomorphic copy of $A$ considered
as an odd component, and $\s =1$ or $\half$ is a scalar parameter, together with the following supercommutative product:
\begin{equation}\label{JAD}
\left\{
\begin{array}{ccl}
a\circ b &=& ab\\[5pt]
a\circ \eta{}b&=& \s\, \eta(ab)\\[5pt]
\eta{}a \circ \eta{}b&=& aD(b)-D(a)b,
\end{array}
\right.
\end{equation}
for all $a,b\in A$.
This construction as well as various generalizations can be found in
\cite{Kant}, \cite{KMcC}, \cite{McC1}, \cite{McC2}, \cite{CaKa}.

The algebra $\Jc_1(A,D)$ is called \textit{vector type} Jordan superalgebra \cite{KMcC}, \cite{McC1}.
The algebra $\Jc_{\frac{1}{2}}(A,D)$ is called \textit{full derivation} Jordan superalgebra \cite{McC2}.
It is known that these algebras are simple iff $A$ has no non-trivial $D$-invariant ideals,  \cite{McC1}, \cite{McC2}.

The algebras $\Jc_1(A,D)$ and $\Jc_{\frac{1}{2}}(A,D)$  are not isomorphic. 
Indeed, the first one is unital whereas the second one is not
(it is half-unital).
We will show, Theorem~\ref{ThmRep}, that such algebras can be obtained from the representation of the same Lie superalgebra.

Direct computations lead to the following.
\begin{prop}
The algebra $(\Jc_{\frac{1}{2}}(A,D), \circ )$ is a Lie antialgebra.
\end{prop}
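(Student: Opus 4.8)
The plan is to verify directly the four defining identities (LA0)--(LA3), since the product \eqref{JAD} is supercommutative by construction. First I would confirm supercommutativity in the three bidegrees: on $A$ it is the commutativity of the associative product; the even--odd case is symmetric because $\s\,\eta(ab)=\s\,\eta(ba)$; and the odd--odd case is antisymmetric, as required for two odd elements, since $aD(b)-D(a)b=-(bD(a)-D(b)a)$ using commutativity of $A$. It then remains to establish the cubic relations.

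Axioms (LA0) and (LA1) are the quickest. Axiom (LA0) is exactly associativity of $A$, which holds by hypothesis. For (LA1), taking $x_1=a$, $x_2=b$ in $\Ac_0=A$ and $y=\eta c$ in $\Ac_1$, both sides reduce to $\s^2\,\eta(abc)$; with $\s=\half$ this equals $\quart\,\eta(abc)$, so the identity holds. I would stress that this is precisely where the value $\s=\half$ enters: matching the explicit factor $\half$ on the right of (LA1) forces $\s^2=\half\s$, that is $\s=\half$.

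The Leibniz identity (LA2) is the main computation and the step I expect to be the chief obstacle, since it is where the derivation property interacts with the scalar $\s$. Setting $x=a$, $y_1=\eta b$, $y_2=\eta c$, the left-hand side is $a\circ(\eta b\circ\eta c)=a\bigl(bD(c)-D(b)c\bigr)=abD(c)-aD(b)c$, while each summand on the right involves a product of the form $\eta(\cdots)\circ\eta(\cdots)$, so expanding by \eqref{JAD} produces derivatives $D(ab)$ and $D(ac)$. Applying the Leibniz rule $D(xy)=D(x)y+xD(y)$ and then commutativity of $A$, I expect the two $\s=\half$ contributions to combine so that the spurious terms cancel and the sum collapses to $abD(c)-aD(b)c$, matching the left-hand side. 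Keeping track of which monomials cancel is the only delicate point.

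Finally, for the odd Jacobi identity (LA3) I would take $y_1=\eta a$, $y_2=\eta b$, $y_3=\eta c$. Each inner bracket $y_2\circ y_3$, and cyclically, lands in $\Ac_0=A$, so by supercommutativity each of the three cyclic terms equals $\half\,\eta$ applied to a product of $a,b,c$ and their $D$-images; the six resulting monomials are then seen to cancel in pairs using commutativity of $A$, giving $0$. I would close by remarking that the same computations with a general parameter show that only $\s=\half$ yields a Lie antialgebra, consistent with the fact that $\Jc_1(A,D)$ is merely the unital vector-type Jordan superalgebra and fails (LA1).
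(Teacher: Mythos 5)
Your proposal is correct and matches the paper's approach: the paper offers no written proof beyond the phrase ``Direct computations lead to the following,'' and your direct verification of supercommutativity and of (LA0)--(LA3) is exactly that computation, carried out accurately (in particular, the cancellation of the $D(a)bc$ terms in (LA2) and the pairwise cancellation in (LA3) both check out, and you correctly identify (LA1) as the axiom forcing $\s=\half$).
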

One can therefore associate a Lie superalgebra to $(\Jc_{\frac{1}{2}}(A,D), \circ )$ using the construction of Section \ref{ols}. 
Denote $\Lf(A,D)$ the Lie superalgebra $\Of\lf\Sf(\Jc_{\frac{1}{2}}(A,D))$.
In this context, the construction $\Lf(A,D)$ can be simplified and expressed in terms of a doubling process as well.

\begin{prop}
The algebra $\Lf(A,D)$ is isomorphic to $A\oplus \eta{}A$ equipped with the following
skewsymmetric superbracket
\begin{equation}\label{LAD}
\left\{
\begin{array}{ccl}
[a, b] &=&aD(b)-D(a)b\\[5pt]
[a, \eta{}b ]&=&\eta\left(aD(b)-\half D(a)b\right)\\[5pt]
[\eta{}a,\eta{}b]&=&ab,
\end{array}
\right.
\end{equation}
for all $a,b\in A$.
\end{prop}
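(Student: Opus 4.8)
The plan is to realise the claimed bracket \eqref{LAD} as the transport of the adjoint Lie superbracket \eqref{olsbrak} under an explicit linear isomorphism, so that the Lie superalgebra axioms for \eqref{LAD} come for free from the construction of $\Of\lf\Sf$. Writing $\Ac=\Jc_{\frac{1}{2}}(A,D)$ with $\Ac_0=A$ and $\Ac_1=\eta A$, recall that $\Lf(A,D)=\Of\lf\Sf(\Ac)$ has odd part $\Of\lf\Sf(\Ac)_1=\eta A$ and even part $\Of\lf\Sf(\Ac)_0=\eta A\otimes\eta A/\Ic$. I would define a linear map $\Psi$ from $\Lf(A,D)$ to $A\oplus\eta A$ by $\Psi(\eta a)=\eta a$ on the odd part and $\Psi(\eta a\odot\eta b)=ab$ on the even part, and then check that $\Psi$ is a bijection intertwining \eqref{olsbrak} with \eqref{LAD}.

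First I would verify that the even-part map is well defined, i.e. that it kills the relations generating $\Ic$. The generator $\eta a\otimes\eta b-\eta b\otimes\eta a$ maps to $ab-ba=0$ by commutativity of $A$. For the second family, note that for $x=c\in\Ac_0$ the mixed product in $\Ac$ reads $\eta a\circ c=\frac{1}{2}\eta(ac)$ by \eqref{JAD}; hence the relation $\eta(ac)\odot\eta b=\eta a\odot\eta(bc)$ maps to $(ac)b-a(bc)=0$, again by commutativity. Using this same relation in the form $\eta a\odot\eta b=\eta 1\odot\eta(ab)$ shows that $\Of\lf\Sf(\Ac)_0$ is spanned by the elements $\eta 1\odot\eta c$, $c\in A$; since $\Psi(\eta 1\odot\eta c)=c$, the even-part map is a linear bijection onto $A$ (surjectivity uses the unit of $A$, and $c\mapsto\eta 1\odot\eta c$ is a right inverse). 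Thus $\Psi$ is a bijection.

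It remains to check that $\Psi$ transforms the three lines of \eqref{olsbrak} into the three lines of \eqref{LAD}, where in \eqref{olsbrak} juxtaposition denotes the product $\circ$ of $\Ac$. For two odd elements, $[\eta a,\eta b]=\eta a\odot\eta b$ maps to $ab$, matching $[\eta a,\eta b]=ab$. For the mixed bracket I would compute $\eta b\circ\eta c=bD(c)-D(b)c\in A$ and then $\eta a\circ\bigl(bD(c)-D(b)c\bigr)=\frac{1}{2}\eta\bigl(abD(c)-aD(b)c\bigr)$, so that $[\eta a\odot\eta b,\eta c]=\eta a(\eta b\,\eta c)+\eta b(\eta a\,\eta c)$ collapses, via the Leibniz rule $D(ab)=aD(b)+bD(a)$, to $\eta\bigl(abD(c)-\frac{1}{2}D(ab)c\bigr)$; this is exactly $[ab,\eta c]$ read off from \eqref{LAD}. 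The longest computation is the even-even bracket $[\eta a\odot\eta b,\eta c\odot\eta d]=2\,\eta a(\eta b\,\eta c)\odot\eta d+2\,\eta b(\eta a\,\eta d)\odot\eta c$; here the factor $2$ in \eqref{olsbrak} cancels the two factors of $\frac{1}{2}$ coming from $\sigma=\frac{1}{2}$, and after applying $\Psi$ and expanding with the Leibniz rule one obtains $abd\,D(c)+abc\,D(d)-acd\,D(b)-bcd\,D(a)$, which coincides with $[ab,cd]=ab\,D(cd)-D(ab)\,cd$ from \eqref{LAD}. The only real obstacle is the bookkeeping in this last identity: one must track the $\sigma=\frac{1}{2}$ factors, the factor $2$ in \eqref{olsbrak}, and the repeated use of commutativity and the Leibniz rule, but no conceptual difficulty arises. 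Once these three matches are verified, $\Psi$ is the desired isomorphism and \eqref{LAD} inherits the Lie superalgebra structure from $\Of\lf\Sf(\Ac)$.
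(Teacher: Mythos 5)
Your proposal is correct and takes essentially the same route as the paper: both identify the even part $\Of\lf\Sf(\Jc_{\frac{1}{2}}(A,D))_0$ with $A$ via $\eta a\odot\eta b\mapsto ab$ (well defined thanks to the unit and commutativity of $A$) and then transport the bracket \eqref{olsbrak} into \eqref{LAD}. The paper's proof states this identification in three lines; you have simply supplied the verifications (well-definedness on the generators of $\Ic$, bijectivity via $\eta a\odot\eta b=\eta 1\odot\eta(ab)$, and the three bracket computations) that it leaves implicit, and these are all carried out correctly.
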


\begin{proof}
Any even element in $\Lf(A,D)$ can be identified with an element of $A$ as follows
$$
\eta{}a\odot \eta{}b \equiv ab.
$$
Since $A$ is unital, the above identification does not depend on the representative
$\eta{}a\odot \eta{}b $.
Through this identification the bracket on $\Lf(A,D)$ given in \eqref{olsbrak} becomes as in \eqref{LAD}.
\end{proof}

\begin{ex}
The following choice
$$
A=\C[x] ,
\quad
D= \partial_x,
$$
leads in the case $\s=1$ to the well known Jordan superalgebras of vector fields on the line over $\C$, \cite{MeZ} and in the case $\s=\half$ to 
the Kaplansky-McCrimmon polynomial superalgebra, \cite{Kap}, \cite{McC2}.
The variant considering $A=\C[x,x^{-1}]$ would lead exactly to the algebra $\Ac\Kc(1)$
(given in Examples \ref{K1AK1} and \ref{2pts})

\end{ex}

\subsection{Main example}\label{ConsS2}
We apply the doubling process with the following choices
$$
A=\C[x,y^{\pm 1}]/(x^2-\t y^{2p}-1), 
\quad
D=x\partial_y+p\t y^{2p-1}\partial_x.
$$
where $\t\in \C^*$ and $p\in \Z^*$ are parameters.

We use the notation
$\jtp:=(\Jc_\s(A,D), \circ )$ when $A$ and $D$ are as above.
Constructions in \cite{Zhe1}, \cite{Zhe2} are based on this type of algebras.

\begin{prop}
The algebras $\jtp$ are simple.
\end{prop}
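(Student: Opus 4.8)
The plan is to lean entirely on the simplicity criterion recalled just above: $\Jc_\s(A,D)$ is simple if and only if $A$ has no nontrivial $D$-invariant ideal. This reduces the statement to a question about the commutative pair $(A,D)$, and the Jordan structure plays no further role. First I would record the basic geometry of $A$. The Laurent polynomial $x^2-\theta y^{2p}-1$ is irreducible in $\C[x,y^{\pm 1}]=\C[y^{\pm1}][x]$: as a quadratic in $x$ it factors only if $\theta y^{2p}+1$ is a square in the UFD $\C[y^{\pm 1}]$; but (assuming $p>0$, the case $p<0$ being symmetric under $y\mapsto y^{-1}$) such a square root would be an ordinary polynomial $f$ with $f(0)\neq 0$, whose square then carries a nonzero monomial of degree strictly between $0$ and $2p$, whereas $\theta y^{2p}+1$ is a binomial. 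Hence $A$ is an integral domain of Krull dimension one, the coordinate ring of the affine curve $C=\{x^2=\theta y^{2p}+1\}\setminus\{y=0\}$. I would also check that $D$ is a genuine derivation of $A$, i.e. that it kills the defining relation: with $D(x)=p\theta y^{2p-1}$ and $D(y)=x$ one gets $D(x^2-\theta y^{2p}-1)=2x\cdot p\theta y^{2p-1}-2p\theta y^{2p-1}\cdot x=0$.

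The key concrete input is that the vector field $D$ has no zeros on $C$. At a closed point $(x_0,y_0)\in C$ the maximal ideal $\mathfrak{m}=(x-x_0,y-y_0)$ is $D$-invariant precisely when $D(x)$ and $D(y)$ both vanish at $(x_0,y_0)$, that is when $x_0=0$ and $p\theta y_0^{2p-1}=0$. Since $y_0\neq 0$ on $C$ and $\theta\in\C^*$, $p\in\Z^*$, the second equation is impossible, so no maximal ideal of $A$ is $D$-invariant. This is exactly where the hypotheses $\theta\neq 0$, $p\neq 0$ and the invertibility of $y$ enter.

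Now I would run the standard argument that a nowhere-vanishing vector field on a reduced affine curve admits no nontrivial invariant ideals. Suppose $I\neq 0$ is $D$-invariant with $I\neq A$. By Seidenberg's lemma (valid in characteristic $0$ for a Noetherian ring such as $A$), the radical $\sqrt{I}$ is again $D$-invariant, and it is still proper. As $A$ is a domain of dimension one, $A/\sqrt{I}$ is a nonzero finite-dimensional reduced $\C$-algebra, hence isomorphic to a product $\C^r$. The derivation $D$ descends to a derivation $\bar D$ of $\C^r$; each idempotent $e$ satisfies $\bar D(e)=2e\,\bar D(e)$, so $(1-2e)\bar D(e)=0$ with $1-2e$ a unit, forcing $\bar D(e)=0$. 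Thus $\bar D$ preserves each factor and restricts to a derivation of $\C$, which vanishes; so $\bar D=0$, i.e. $D(A)\subseteq\sqrt{I}$. Then any maximal ideal $\mathfrak{m}\supseteq\sqrt{I}$ satisfies $D(\mathfrak{m})\subseteq D(A)\subseteq\sqrt{I}\subseteq\mathfrak{m}$ and is $D$-invariant, contradicting the second paragraph. Hence no nonzero proper $D$-invariant ideal exists, and $\jtp=\Jc_\s(A,D)$ is simple.

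The main obstacle is conceptual rather than computational: it is the translation of ``$I$ is a proper $D$-invariant ideal'' into ``$D$ has a zero,'' which requires first passing to the radical (so that the quotient is reduced and the idempotent argument applies) and then reading off invariance of a maximal ideal. The irreducibility check and the no-zeros computation are elementary, and one should note that the conditions $\theta,p\neq 0$ together with $y$ being invertible are precisely what make $D$ nowhere vanishing, which is the geometric reason these algebras are simple. A purely algebraic route, iterating $D$ on a nonzero element of $I$ to manufacture a unit, is possible but messier, so I would present the geometric argument above.
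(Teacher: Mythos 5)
Your proof is correct, but it follows a genuinely different route from the paper's. The step you share with the paper is the reduction, via McCrimmon's criterion recalled just before the proposition, to showing that $A$ has no nontrivial $D$-invariant ideal. From there the paper (adapting Zhelyabin's argument) proceeds by exactly the ``purely algebraic route'' you set aside at the end as messier, and it is in fact quite short: given a nonzero $D$-invariant ideal $I$, multiply an element $f(y)+xg(y)\in I$ by $f(y)-xg(y)$ to obtain a nonzero Laurent polynomial $h(y)\in I$; show inductively, applying $D$ and using the relation $x^2=1+\theta y^{2p}$, that $x^{2k-1}h^{(k)}(y)\in I$, hence that $x^m\in I$ for some $m$; then descend from $x^m$ to $x^{m-1}$ via the identity $yD(x^m)=pm\,x^{m+1}-pm\,x^{m-1}$, until $1\in I$. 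Your argument is instead geometric and structural: $D$ is a nowhere-vanishing vector field on the affine curve $\mathrm{Spec}\,A$ (precisely because $\theta\in\C^*$, $p\in\Z^*$ and $y$ is invertible), and Seidenberg's lemma plus the structure of reduced zero-dimensional quotients shows that a nontrivial $D$-invariant ideal would force some maximal ideal to be $D$-invariant, i.e.\ would force a zero of $D$. Both proofs are sound, and each buys something: yours isolates the conceptual reason for simplicity and transfers with no change to the torus algebras of Section \ref{torus} (one need only check that $D$ has no zeros on that curve), whereas the paper's computation is elementary and self-contained (no Seidenberg, no Nullstellensatz) and works verbatim over any field of characteristic zero --- which is the relevant setting for the ``new type'' algebras over non-algebraically-closed fields in Section \ref{newtype}, while your identification of maximal ideals with points $(x_0,y_0)$ ties the argument as written to $\C$ being algebraically closed.
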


\begin{proof}
It is equivalent to show that the algebra $A$ has no non-trivial $D$-invariant ideals.
The proof given in \cite{Zhe2} in a particular case
can be easily adapted for arbitrary values of $\theta$ and $p$.
We sketch the proof here for the sake of completeness.

Assume $I$ is a non-zero $D$-invariant ideal of $A$.
Choose any element $f(y)+xg(y)$ in $I$, where $f,g\in \C[y^{\pm 1}]$. One has 
$$
f(y)^2-(1+\t y^{2p})g(y)^2=\big(f(y)+xg(y)\big)\big(f(y)-xg(y)\big)\in I.
$$
Therefore, we obtain that $I$ contains an element $h(y)$ of $\C[y^{\pm 1}]$. 
Multiplying by $y^{m}$ for some convenient $m \in \N$, 
we can assume that $h(y)$ belongs to $\C[y]$.

Now, we can prove by induction that the elements $x^{2k-1}h^{(k)}(y)$, 
where $h^{(k)}$ is the $k$-th derivative of $h$ with respect to $y$,
all belong to $I$. 
Indeed, writing that $D(h(y))=xh'(y)$ belongs to $I$ gives the property for $k=1$.
The induction is then based on the following equality: 
$$
D(x^{2k}h^{(k)}(y))=x^{2k+1}h^{(k+1)}(y)+2kp\t y^{2p-1}x^{2k-1}h^{(k)}(y).
$$
Consequently, we obtain that $I$ contains an element $x^m$, for a suitable $m\in \N$.
The following computation
$$
yD(x^m)=pmx^{m+1}-pmx^{m-1},
$$
implies that $x^{m-1}$ also belongs to $I$. 
By induction, this yields to $1$ belongs to $I$, and therefore $I$ is equal to $A$ itself.
\end{proof}

A presentation by generators and relations of the algebra $\jtp$ is the following.
$$\jtp=\langle x_n,\,y_n,\, a_i,\, b_i,\; n\in \Z,\;i\in \textstyle \half+\Z\rangle\;:$$
\begin{equation}
\left\{
\begin{array}{ccl}
x_n\; x_m&=&x_{n+m}\\[5pt]
x_n\;y_m&=&y_{n+m}\\[5pt]
y_n\;y_m&=&x_{n+m}+\t x_{n+m+2p}\\[5pt]
x_n\;a_j&=&\s a_{n+j}\\[5pt]
x_n\;b_j&=&\s b_{n+j}\\[5pt]
y_n\;a_j&=&\s b_{n+j}\\[5pt]
y_n\;b_j&=&\s (a_{n+j}+\t a_{n+j+2p})\\[5pt]
a_i\;a_j&=&(j-i)y_{i+j}\\[5pt]
a_i\;b_j&=&(j-i)x_{i+j}+\t (j-i+p)x_{i+j+2p}\\[5pt]
b_i\;b_j&=&(j-i)(y_{i+j}+\t y_{i+j+2p})\\[5pt]
\end{array}
\right.
\end{equation}

This presentation is obtained from the construction \eqref{JAD} using the notation 
$$
\textstyle x_n=y^n,\; y_n=xy^n, \;a_{n-\frac{1}{2}}=\eta{}y^{n},\;
b_{n-\frac{1}{2}}= \eta(xy^{n}).
$$

The Lie superalgebra $\Lf(\t,p)=\Of\lf\Sf(\Jc_{\half}(\t,p))$  is 
described as follows. 
$$\Lf(\t,p)=\langle L_n, H_n, A_i, B_i, \; n\in \Z,\;i\in \textstyle \half+\Z \rangle $$ 

\begin{equation*}
\left\{
\begin{array}{ccl}
[A_i\; , \;A_j]&=&L_{i+j}\\[5pt]
[B_i\; , \;B_j]&=&L_{i+j}+\t L_{i+j+2p}\\[5pt]
[A_i\; , \;B_j]&=&H_{i+j}\\[5pt]
[L_n\; , \;A_i]&=&(i-\frac{n}{2})B_{n+i}\\[5pt]
[L_n\; , \;B_i]&=&(i-\frac{n}{2})A_{n+i}+\t (i-\frac{n}{2}+p)A_{n+i+2p}\\[5pt]
[H_n\; , \;A_i]&=&(i-\frac{n}{2})A_{n+i}+\t (i-\frac{n}{2}-\frac{p}{2})A_{n+i+2p}\\[5pt]
[H_n\; , \;B_i]&=&(i-\frac{n}{2})B_{n+i}+\t (i-\frac{n}{2}+\frac{p}{2})B_{n+i+2p}\\[5pt]
[L_n\; , \;L_m]&=&(m-n)H_{n+m}\\[5pt]
[L_n\; , \;H_m]&=&(m-n)L_{n+m}+\t (m-n+p)L_{n+m+2p}\\[5pt]
[H_n\; , \;H_m]&=&(m-n)(H_{n+m}+\t H_{n+m+2p})\\[5pt]
\end{array}
\right.
\end{equation*}

The particular values $\t=\a^2$, $p=-1$ lead to the algebras described in Proposition \ref{JKNLKN}.
One immediately deduces the following

\begin{prop}\label{ThmSubalg} 
(i) The Lie superalgebra $\Lf_{0,3}$ is isomorphic to the subalgebra of $\Lf(\a^2,-1)$
generated by $\{H_{2k}, L_{2k+1},A_{2k+\half}, B_{2k-\half};\; k\in \Z\}$,

(ii) The Lie antialgebra $\Jc_{0,3}$ is isomorphic to the subalgebra of $\Jc_{\half}(\a^2,-1)$
generated by $\{x_{2k}, y_{2k+1},a_{2k+\half}, b_{2k-\half};\; k\in\Z\}$.
\end{prop}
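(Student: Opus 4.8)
The plan is to establish (ii) by directly matching the presentation of $\Jc_{\half}(\a^2,-1)$ displayed above with that of $\Jc_{0,3}$ recorded in Proposition~\ref{JKNLKN}(ii), and then to obtain (i) as a formal consequence of the adjoint construction $\Of\lf\Sf$ and Theorem~\ref{KNlink}(i).

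For (ii) I would first set $\s=\half$, $\t=\a^2$ and $p=-1$ in the ten structure relations of $\jtp$, so that every index shift $n\mapsto n+2p$ becomes $n\mapsto n-2$. Next I would check that the subspace $\Sc$ spanned by the listed generators is a subalgebra: computing each of the ten products of two generators from the specialized relations, one verifies that the result again lies in the span of generators of the prescribed index parities, the $\a^2$-corrections appearing precisely for the matching parities. Then I would fix the linear bijection carrying the even generators onto $G_{2k},G_{2k+1}$ and the odd generators onto the corresponding $\vp_i$, arranged so that it restricts to the natural identification $\Ab_{0,3}\to\Ab_{0,3}$ on the even part and to an $\Ab_{0,3}$-module isomorphism onto $\F_{-\half}$ on the odd part. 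Comparing the resulting structure constants family by family with those of Proposition~\ref{JKNLKN}(ii) then finishes (ii); each comparison is a one-line computation, the only real content being the correct pairing of indices and parities.

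For (i) I would not repeat the computation but argue functorially. Since $\Jc_{0,3}$ is a Lie antialgebra, Theorem~\ref{KNlink}(i) gives $\Lf_{0,3}\cong\Of\lf\Sf(\Jc_{0,3})$, and by definition $\Lf(\a^2,-1)=\Of\lf\Sf(\Jc_{\half}(\a^2,-1))$. The inclusion of Lie antialgebras $\Jc_{0,3}\hookrightarrow\Jc_{\half}(\a^2,-1)$ furnished by (ii) induces a homomorphism $\Of\lf\Sf(\Jc_{0,3})\to\Of\lf\Sf(\Jc_{\half}(\a^2,-1))$ whose image is the Lie subalgebra generated by the odd part $(\Jc_{0,3})_1$ together with the even elements $a\odot b$; injectivity is visible directly from the explicit brackets \eqref{olsbrak}. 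Under the identifications $a_i\leftrightarrow A_i$, $b_i\leftrightarrow B_i$ and $a\odot b\leftrightarrow L,H$ read off from \eqref{olsbrak}, this generating set is exactly $\{H_{2k},L_{2k+1},A_{2k+\half},B_{2k-\half}\}$, which is the assertion of (i).

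The step I expect to be delicate is the matching in (ii): although both presentations are explicit, their $\a^2$-twisted terms are governed by the even/odd parity of the indices, and the bijection must be chosen so that these correction terms correspond exactly; pinning down this pairing — rather than verifying any individual identity — is where the care lies. As an independent check one can also prove (i) directly by matching against Proposition~\ref{JKNLKN}(i), which cross-validates the index conventions used in the functorial argument.
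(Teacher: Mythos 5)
Your plan for (ii) is, in outline, exactly the paper's proof: the paper offers nothing beyond ``specialize $\t=\a^2$, $p=-1$ and compare with Proposition~\ref{JKNLKN}'', and your functorial derivation of (i) from (ii) via $\Of\lf\Sf$ and Theorem~\ref{KNlink}(i) is a legitimate variant of the paper's (equally implicit) direct comparison of the Lie presentations. The trouble is that the one step you defer --- ``pinning down the pairing of indices and parities'' --- is the entire content of the statement, and when one actually carries it out it does not close the way you assert. Matching the even parts forces $G_{2k}\mapsto x_{2k}$, $G_{2k+1}\mapsto y_{2k+1}$. The even--odd products then force the parity assignment: in Proposition~\ref{JKNLKN}(ii) the $\a^2$-correction occurs in $G_n\circ\vp_i$ exactly when $n$ is odd and $i-\half$ is even, while in the presentation of $\jtp$ it occurs in $y_n\circ b_j$ and never in $y_n\circ a_j$; hence $\vp_{2k+\half}$ must be sent to a $b$-type generator and $\vp_{2k-\half}$ to an $a$-type one. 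The odd--odd products then fix the indices, and they rule out the generators named in the statement: in $\Jc_{0,3}$ one has $\vp_{\half}\circ\vp_{-\half}=-G_0$, whereas the specialized presentation gives
$$
a_{\half}\circ b_{-\half}=-x_0-2\a^2x_{-2},
\qquad
b_{\half}\circ a_{-\half}=-x_0 .
$$
In fact the natural embedding (geometrically $\vp_i=\sqrt2\,G_{i+\half}(dz)^{-\half}$ corresponds to $\eta G_{i+\half}$, with $G_{2k}=y^{2k}$, $G_{2k+1}=xy^{2k+1}$ under $z\mapsto xy$) sends $\vp_{2k-\half}\mapsto a_{2k-\half}$ and $\vp_{2k+\half}\mapsto b_{2k+\half}$, so the subalgebra isomorphic to $\Jc_{0,3}$ is $\langle x_{2k},y_{2k+1},a_{2k-\half},b_{2k+\half}\rangle$: relative to the conventions fixed in the paper ($a_{n-\half}=\eta y^n$, $b_{n-\half}=\eta(xy^n)$), the odd generators in the stated Proposition have their index parities swapped, and likewise in (i) they should be $A_{2k-\half}$, $B_{2k+\half}$.

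This cannot be repaired by a cleverer bijection, so asserting that the family-by-family comparison ``finishes (ii)'' is a genuine gap rather than an omitted routine verification. The span of the stated generators is indeed closed under $\circ$, but its odd part is the free rank-one module on $\eta y$ over its even part, and one identifies that subalgebra with $\Jc_{\half}\bigl(\Ab_{0,3},(z^2-\a^2)\frac{d}{dz}\bigr)$, while $\Jc_{0,3}\cong\Jc_{\half}\bigl(\Ab_{0,3},\frac{d}{dz}\bigr)$. Any isomorphism $\Jc_{\half}(A,D_1)\to\Jc_{\half}(A,D_2)$ consists of an automorphism $\phi$ of $A$ and a unit $u$ (the odd component being $\eta a\mapsto\eta(\phi(a)u)$), and the odd--odd product forces $D_2=u^{-2}\phi D_1\phi^{-1}$; for $A=\Ab_{0,3}$ every $\phi$ comes from a M\"obius map $g$ preserving $\{\pm\a,\infty\}$, so $\phi D_1\phi^{-1}=\frac{1}{g'}\frac{d}{dz}$ with $g'$ of even divisor, whereas $z^2-\a^2$ vanishes simply at $\pm\a$ --- no such pair $(\phi,u)$ exists. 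An honest execution of your plan would therefore end with a corrected statement, not a proof of the stated one; and your argument for (i) inherits the same parity error through (ii). (A minor further point: injectivity of the induced map $\Of\lf\Sf(\Jc_{0,3})\to\Of\lf\Sf(\Jc_{\half}(\a^2,-1))$ is not ``visible directly from \eqref{olsbrak}''; it follows from the identification $\Of\lf\Sf(\Jc_{0,3})_0\cong\Ab_{0,3}$, which uses unitality of the even part as in Section~\ref{JsAD}.)
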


\subsection{Interesting subalgebras}\label{newtype}
In \cite{Zhe1}, \cite{Zhe2}, the author constructs infinite dimensional Jordan superalgebras of ``new type'', in the sense that they are not isomorphic to an algebra of type $\Jc_\s(A,D)$ nor to Cheng-Kac superalgebras. The superalgebras in \cite{Zhe1}, \cite{Zhe2} are subalgebras of $\Jc_1(-1,p)$ for $p=1$ and $p=2$,
and are considered over an arbitrary ground field of characteristic zero (not necessarily $\C$).
Let us introduce the following notation
$$\Jc_\s(\t,p)^+=\langle x_{2k}, y_{2k+1},a_{2k+\half}, b_{2k-\half};\; k\in\N\rangle. $$

\begin{prop} \cite{Zhe1} \cite{Zhe2}
(i) If $-1$  is not a square in the ground field,
 then $\Jc_1(-1,1)^+$ is a simple Jordan superalgebra of ``new type'',
 
(ii) $\Jc_1(-1,2)^+$ is always a simple Jordan superalgebra of ``new type''.
\end{prop}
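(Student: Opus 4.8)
The statement packages three claims for each item, which I would treat in increasing order of difficulty: that $\Jc_\s(\t,p)^+$ is a Jordan subsuperalgebra, that it is simple, and that it lies outside the two known families (the doubling algebras $\Jc_\s(A,D)$ and the Cheng--Kac superalgebras). The first is immediate from the presentation: since $\Jc_\s(\t,p)^+$ is the span of the basis vectors $x_{2k},y_{2k+1},a_{2k+\half},b_{2k-\half}$ with $k\in\N$, one only needs to observe that every product of two such generators again carries nonnegative indices, which holds precisely because $p>0$ forces the shifts $n\mapsto n+2p$ on the right-hand sides to move indices upward; hence the span is closed under $\circ$ and inherits the super Jordan identity from the ambient $\Jc_1(-1,p)=\Jc_1(A,D)$.

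For simplicity I would run an ideal chase directly inside $\Jc_\s(\t,p)^+$, in the spirit of the earlier proof that $\jtp$ is simple but carried out intrinsically, since the $D$-invariant-ideal criterion for $A$ is unavailable here (its unavailability is exactly the ``new type'' phenomenon). Given a nonzero ideal $I$, I would first use the odd products $a_i\circ a_j=(j-i)y_{i+j}$, $a_i\circ b_j$ and $b_i\circ b_j$, whose structure constants $(j-i)$ and $(j-i+p)$ are generically nonzero, to force a single even basis vector into $I$; the even-on-odd relations $x_n\circ a_j=\s a_{n+j}$ and $y_n\circ b_j=\s(a_{n+j}+\t a_{n+j+2p})$ together with the even multiplication table would then propagate membership across the whole index lattice. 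The hypothesis on $-1$ in~(i) enters at this stage: with $\t=-1$, $p=1$ and $c^2=-1$ the substitution $u=x+cy$, $v=x-cy$ gives $uv=1$, collapsing $A=k[x,y^{\pm1}]/(x^2+y^2-1)$ to a one-variable Laurent-type ring and splitting off proper pieces, so forbidding $-1$ from being a square is what prevents this collapse. For $p=2$ the element $y$ is a genuine square root of $(u-v)/(2c)$ and cannot be absorbed into $u,v$, so the two-variable structure survives regardless, which is why~(ii) is unconditional.

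The main obstacle is the ``new type'' assertion, separating $\Jc_\s(\t,p)^+$ from both families up to isomorphism. Here I would first dispatch the half-unital case cheaply: $\Jc_1(\t,p)^+$ is unital, with unit $x_0$, whereas every $\Jc_{1/2}(A,D)$ is only half-unital, so no isomorphism with the $\s=\half$ doubling algebras is possible. The remaining and delicate task is to rule out the vector-type algebras $\Jc_1(A,D)$ and the Cheng--Kac superalgebras, for which I would extract an isomorphism invariant from the module structure of the odd part over the even part. In any $\Jc_1(A,D)$ the odd part $\eta A$ is the free rank-one regular module over the even part $A$, recovered together with the Wronskian pairing $\eta a\circ\eta b=aD(b)-D(a)b$; the plan is to compute $(\Jc_\s(\t,p)^+)_0$ and its action on $(\Jc_\s(\t,p)^+)_1$ and to exhibit the distinguished lowest odd vector $b_{-\half}$ (reachable only through the unit, as $x_0\circ b_{-\half}$) as an obstruction to realizing the odd part as a regular module, so that no presentation $A\oplus\eta A$ reproduces the algebra; the same invariant, compared against the rigid even-part/odd-module pattern of the Cheng--Kac family, separates it from that family as well. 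Making this non-realizability argument airtight, and checking it against every Cheng--Kac superalgebra, is where I expect the real work to lie.
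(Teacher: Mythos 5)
First, a framing remark: the paper does not prove this proposition at all --- it is imported from Zhelyabin \cite{Zhe1}, \cite{Zhe2} (hence the citations in the statement), and the only internal commentary is the remark in Section \ref{newtype} that over $\C$ the analogous algebras $\Jc_{\half}(\t,1)^+$ fail to be of ``new type'', via Proposition \ref{isoWitt}. So your sketch can only be measured against Zhelyabin's known argument. Your closure and unitality observations are fine ($x_0$ is indeed a unit of $\Jc_1(\t,p)^+$, which cheaply excludes the half-unital family $\Jc_{\half}(A,D)$). The serious error is where you let the hypothesis ``$-1$ is not a square'' act: you place it in the simplicity argument, claiming that when $c^2=-1$ the substitution $u=x+cy$, $v=x-cy$, $uv=1$ collapses $A$ and ``splits off proper pieces''. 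It does not. That collapse to a Laurent-type ring creates no $D$-invariant ideals and no Jordan ideals; $\Jc_1(-1,1)^+$ remains simple when $-1$ is a square. What fails in that case is the \emph{new type} property: the algebra becomes isomorphic to a vector-type algebra $\Jc_1(A',D')$. The paper itself displays exactly this phenomenon over $\C$ (where $-1$ is a square): by Proposition \ref{isoWitt} and the remark in Section \ref{newtype}, $\Jc_{\half}(\t,1)^+\cong\Ac\Kc(1)$, which is a full-derivation algebra over $\C[x,x^{-1}]$ --- not of new type, yet perfectly simple. So your division of labor attaches the hypothesis to the wrong claim, and the step ``not a square $\Rightarrow$ no collapse $\Rightarrow$ simple'' would not go through as a proof of anything.

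Second, your module-theoretic idea for the ``new type'' part is the right one --- it is in essence Zhelyabin's actual argument --- but your proposed invariant (``the distinguished lowest odd vector $b_{-\half}$, reachable only through the unit'') is not an isomorphism invariant; the argument must be phrased in terms of the odd part as a module over the even part. Concretely, for $p=1$, $\t=-1$: the even part of $\Jc_1(-1,1)^+$ is $\Kb[y^2,xy]\cong \Kb[s,t]/(s^2+t^2-1)$ (set $s=2y^2-1$, $t=2xy$), the circle algebra, and the odd part is a copy of the $A_0$-module $M=A_0x+A_0y$, which is rank-one projective. If $c^2=-1$ then $M=A_0(x+cy)$ is free, since $x-cy=(x-cy)^2(x+cy)$ and $(x-cy)^2=1-2y^2-2cxy\in A_0$; if $-1$ is not a square, $M$ is the M\"obius module, projective and not free. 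Since the odd part of every $\Jc_\s(A,D)$ is by construction free of rank one over its even part, and freeness of the odd part is preserved by any superalgebra isomorphism, non-freeness is the obstruction --- this is precisely where the hypothesis on $-1$ must be consumed, and for $p=2$ the even part is the coordinate ring of a punctured genus-one curve (cf. Section \ref{torus}), whose relevant module class is nontrivial over every field of characteristic zero, which is why (ii) is unconditional. Your sketch never ties the hypothesis to this non-freeness, and it leaves the exclusion of the Cheng--Kac superalgebras entirely open; as it stands, the ``new type'' part is a plan rather than a proof, and the simplicity part rests on a false mechanism.
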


Similar statements hold for the half-unital algebras $\Jc_{\half}(\t,p)^+$.
The ``new type'' algebras $\Jc_{\half}(\t,1)^+$ can not be achieved in the geometric setting of Krichever-Novikov, due to solutions $\a^2=\t$ in $\C$, see Proposition \ref{isoWitt}.
The ``new type'' algebras $\Jc_{\half}(\t,2)^+$ can be realized geometrically considering the Krichever-Novikov algebras coming from a torus with two punctures, see Section \ref{torus}.

\subsection{From Lie representations to Jordan representations}\label{SecThmKN}
A remarkable property is that both Jordan algebras $\Jc_{\half}(A,D)$ and $\Jc_{1}(A,D)$ can be realized using the density modules of the Lie superalgebra $\Lf(A,D)$.
Here we describe explicitly this property for the algebras $\jtp$. 
The representations correspond to the ones defined geometrically in Section \ref{RepSec}.

Consider the infinite dimensional vector superspace $\Vc_\l$, with basis 
$\{f_m,g_m, \phi_j, \g_j\}$, $m\in \Z,\;j\in \half+\Z$, where $f_m$, $g_m$ are even elements and $\phi_j$, $\g_j$ odd elements. Define the following odd operators $A_i$, $B_i$ on $\Vc_\l$
\begin{equation}\label{actionL}
\left.
\begin{array}{ccl}
A_i\;\cdot \phi_j&=&f_{i+j}\\[5pt]
A_i\;\cdot \g_j&=&g_{i+j}\\[5pt]
A_i\;\cdot f_m&=&(\frac{m}{2}+\l i)\g_{m+i}\\[5pt]
A_i\;\cdot g_m&=&(\frac{m}{2}+\l i)\phi_{m+i}+\t (\frac{m}{2}+\l i +\frac{p}{2})\phi_{m+i+2p}\\[15pt]
B_i\;\cdot \phi_j&=&g_{i+j}\\[5pt]
B_i\;\cdot \g_j&=&f_{i+j}+\t f_{i+j+2p}\\[5pt]
B_i\;\cdot f_m&=&(\frac{m}{2}+\l i)\phi_{m+i}+\t (\frac{m}{2}+\l i +\l p)\phi_{m+i+2p}\\[5pt]
B_i\;\cdot g_m&=&(\frac{m}{2}+\l i)\g_{m+i}+\t (\frac{m}{2}+\l i +(\frac{1}{2}+\l)p)\g_{m+i+2p}\\
\end{array}
\right.
\end{equation}
and the following even operators $L_n$, $H_n$
\begin{equation*}
\left.
\begin{array}{ccl}
L_n\;\cdot \phi_j&=&\big( (\half +\l)n+j\big)\g_{n+j}\\[8pt]
L_n\;\cdot \g_j&=&\big( (\half +\l)n+j\big)\phi_{n+j}+ \t \big( (\half +\l)n+j+p\big)\phi_{n+j+2p}\\[8pt]
L_n\;\cdot f_m&=&(m+\l n)g_{m+n}\\[8pt]
L_n\;\cdot g_m&=&(m+\l n)f_{m+n}+\t (m+\l n+p)f_{m+n+2p}\\[15pt]
H_n\;\cdot \phi_j&=&\big( (\half +\l)n+j\big)\phi_{n+j}+\t\big( (\half +\l)(n+p)+j \big)\phi_{n+j+2p}\\[8pt]
H_n\;\cdot \g_j&=&\big( (\half +\l)n+j\big)\g_{n+j}+\t \big( (\half +\l)n+j+(\frac{3}{2}+\l)p\big)\g_{n+j+2p}\\[8pt]
H_n\;\cdot f_m&=&(m+\l n)f_{m+n}+\t(m+\l n+\l p)f_{m+n+2p}\\[8pt]
H_n\;\cdot g_m&=&(m+\l n)g_{m+n}+\t(m+\l n+(\l+1)p)g_{m+n+2p}.\\
\end{array}
\right.
\end{equation*}

\begin{prop}
The system \eqref{actionL} defines a representation  $ \Lf(\t,p) \rightarrow \End(\Vc_\l)$ of Lie superalgebras.
\end{prop}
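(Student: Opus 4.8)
The plan is to use that $\Lf(\t,p)$ is presented by the basis $\{L_n,H_n,A_i,B_i\}$ together with the ten bracket relations displayed just before \eqref{actionL}. A linear map $\Phi\colon\Lf(\t,p)\to\End(\Vc_\l)$ is a morphism of Lie superalgebras exactly when it respects the bracket on every pair of basis generators, so it suffices to check that the operators of \eqref{actionL} satisfy those ten relations under the supercommutator of $\End(\Vc_\l)$: the anticommutator $[\,,\,]_+$ for the odd--odd pairs $(A_i,A_j)$, $(B_i,B_j)$, $(A_i,B_j)$, and the ordinary commutator $[\,,\,]$ for the mixed and even--even pairs. Each identity is then verified by applying both sides to the four families of basis vectors $f_m,g_m,\phi_j,\g_j$ and comparing coefficients.

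It is not necessary to carry out all ten by hand. I would first verify the three odd--odd relations and the four mixed relations ($[L_n,A_i]$, $[L_n,B_i]$, $[H_n,A_i]$, $[H_n,B_i]$); the three even--even relations ($[L_n,L_m]$, $[L_n,H_m]$, $[H_n,H_m]$) then follow automatically. Indeed, every even generator is an odd--odd bracket (e.g. $L_{i+j}=[A_i,A_j]$, $H_{i+j}=[A_i,B_j]$), so writing an even $E$ as $[x,y]$ with $x,y$ odd and using the graded Jacobi identity in $\End(\Vc_\l)$ in the form $[[x,y],F]=[x,[y,F]]+[y,[x,F]]$ reduces an even--even bracket to a combination of mixed brackets (the inner ones, already checked) and odd--odd brackets (the outer ones, already checked). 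One should note that one cannot shortcut the whole computation through the Lie antialgebra extension theorem of \cite{Ovs}: that would require an LA-representation of $\Jc_{\half}(\t,p)$ to extend to $\Of\lf\Sf(\Jc_{\half}(\t,p))=\Lf(\t,p)$, but by Theorem \ref{goodrep} the Jordan analogue is a representation only for $\l=0,\half$, whereas the present claim holds for all $\l$. Hence the direct verification is genuinely needed, with the operators of \eqref{actionL} being the algebraic transcription of the density-module representation $\widetilde{\r}$ of Section \ref{RepSec}, which for $\t=\a^2,\ p=-1$ restricts to the already-known action on $\Lf_{0,3}$.

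The main obstacle is purely the bookkeeping of the $\t$-shifted terms, which occur in the relations for $[B_i,B_j]$, $[L_n,B_i]$, $[H_n,A_i]$ and $[H_n,B_i]$. For these one must match, on each basis vector, not only the coefficient of the unshifted image but simultaneously that of the image whose index is raised by $2p$, so the verification amounts to confirming a pair of polynomial identities in the parameters $m,n,i,j,\l,p$ (each carried by the factor $\t$). Keeping the $\l$-weighted coefficients and the $+2p$ index shifts correctly aligned through the iterated actions $\Phi(X)\Phi(Y)$ is where sign and index errors are most likely, and is the part that genuinely needs care.
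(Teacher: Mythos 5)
Your proposal is correct, and it follows the first of the two routes the paper offers: the paper's entire proof is the single sentence that the statement ``can be established by direct computations or deduced from Theorem~\ref{goodrep}''. Where you add genuine value is in organizing the direct computation: your reduction of the three even--even relations to the seven others is valid, because $L_{i+j}=[A_i,A_j]$ and $H_{i+j}=[A_i,B_j]$ hold with coefficient $1$, so the graded Jacobi identity in $\End(\Vc_\l)$ rewrites $[\Phi(L_n),\Phi(F)]$ as a combination of already-checked mixed and odd--odd brackets, while the Jacobi identity in $\Lf(\t,p)$ itself --- which is available because $\Lf(\t,p)=\Of\lf\Sf(\Jc_{\half}(\t,p))$ is a Lie superalgebra by the construction of \cite{Ovs} --- identifies the result with $\Phi([L_n,F])$; you invoke the first of these explicitly and the second only implicitly, but both are legitimate inputs here. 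Your remark that one cannot shortcut via the extension theorem for LA-representations is also correct and clarifies why the Lie statement holds for every $\l$ while the Jordan counterparts (Theorem~\ref{goodrep}(ii), Theorem~\ref{ThmRep}) are restricted to special values. Comparing the two routes: yours is self-contained and uniform in the parameters (seven families of identities, each a pair of coefficient identities tracking the $\t$-shifted terms you single out), whereas the paper's alternative deduction from Theorem~\ref{goodrep}(i) is conceptual --- the operators \eqref{actionL} transcribe the density action $\widetilde{\r}$ of Section~\ref{RepSec} --- but rests on the geometric identification of $\Lf(\t,p)$, which the paper makes explicit only for particular values of the parameters (Proposition~\ref{ThmSubalg}, Section~\ref{torus}); your direct verification avoids that caveat entirely.
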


\begin{proof}
This can be established by direct computations or deduced from Theorem~\ref{goodrep}.
\end{proof}

Define even endomorphisms $X_n$ and $Y_n$  of $V_\l$ by
\begin{equation*}\label{actionA}
\left.
\begin{array}{ccl}
X_n\;\cdot \phi_j&=&(\half -\l)\phi_{n+j}\\[8pt]
X_n\;\cdot \g_j&=&(\half -\l)\g_{n+j}\\[8pt]
X_n\;\cdot f_m&=&\l f_{m+n}\\[8pt]
X_n\;\cdot g_m&=&\l g_{m+n}\\
\end{array}
\right.
\qquad
\left.
\begin{array}{ccl}
Y_n\;\cdot \phi_j&=&(\half -\l)\g_{n+j}\\[8pt]
Y_n\;\cdot \g_j&=&(\half -\l)(\phi_{n+j}+\t \phi_{n+j+2p})\\[8pt]
Y_n\;\cdot f_m&=&\l g_{m+n}\\[8pt]
Y_n\;\cdot g_m&=&\l (f_{m+n}+\t f_{m+n+2p}).\\
\end{array}
\right.
\end{equation*}

\begin{thm}\label{ThmRep}
The subspace $\Sc:=\langle X_n, Y_n, A_i,B_i\rangle$ of endomorphisms  of $\Vc_\l$ defined in \eqref{actionL} and \eqref{actionA}
is a Jordan subalgebra of $\big(\End(\Vc_\l), [\,,\,]_+\big)$
if and only if 
$$\l=0, \textstyle\frac{1}{4}, \half.$$
Furthermore, one has the following isomorphisms of Jordan algebra
 \begin{enumerate}
 \item[\textbullet] $\Sc \simeq \Jc_{\half}(\t,p)$, if $\l=0, \half$,
 \item[\textbullet] $\Sc \simeq \Jc_{1}(\t,p)$,  if $\l=\frac{1}{4}$.
 \end{enumerate}
\end{thm}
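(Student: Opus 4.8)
The plan is to separate the two assertions: that $\Sc$ is closed under the super-anticommutator $[\,,\,]_+$, and that the resulting algebra is isomorphic to $\Jc_\s(\t,p)$. The first point is the only nontrivial structural issue, because $\big(\End(\Vc_\l),[\,,\,]_+\big)$ is always a Jordan superalgebra and any $\Z_2$-graded subspace closed under the product is automatically a Jordan subalgebra (the defining identities being inherited). Since $\Sc=\langle X_n,Y_n\rangle\oplus\langle A_i,B_i\rangle$ with the $X_n,Y_n$ even and the $A_i,B_i$ odd, I would compute the super-anticommutators of the generators grade by grade, reading the structure constants off \eqref{actionL} and the definitions of $X_n,Y_n$.

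First I would treat the mixed and the odd brackets, which I expect to close for every value of $\l$. A direct computation gives, for instance, $[A_i,A_j]_+=A_iA_j-A_jA_i=(j-i)\,Y_{i+j}$ and $[X_n,A_i]_+=X_nA_i+A_iX_n=\half A_{n+i}$, while $[A_i,B_j]_+$ lands in $\Span\{X_{i+j},X_{i+j+2p}\}$ and $[X_n,B_i]_+,[Y_n,A_i]_+,[Y_n,B_i]_+$ land in $\Span\{A,B\}$ (with the expected $\t$-shifted terms). The key point is that the $\l$-dependent eigenvalues cancel: $X_n$ acts by the scalar $\l$ on $f_m,g_m$ and by $\half-\l$ on $\phi_j,\g_j$, so in each even--odd bracket the two contributions combine to the $\l$-free factor $\half$. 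None of these brackets constrains $\l$.

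The heart of the argument is the three even--even brackets. Using the eigenvalues above (and that $Y_n$ acts by the same scalars after the parity-preserving swap $f\leftrightarrow g$, $\phi\leftrightarrow\g$, together with the $\t$-shift), one finds that $[X_n,X_m]_+$ acts by $2\l^2$ on $f_m,g_m$ and by $2(\half-\l)^2$ on $\phi_j,\g_j$. The only even generator that can occur as output, having no parity swap and no $\t$-shift, is $X_{n+m}$, which acts by $\l$ and by $\half-\l$; hence closure forces $2\l^2=c\,\l$ and $2(\half-\l)^2=c\,(\half-\l)$ for a common scalar $c$, i.e. $2\l=2(\half-\l)$, giving $\l=\quart$, together with the degenerate values $\l=0$ and $\l=\half$ where one family of eigenvalues vanishes identically. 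The same condition $\l\in\{0,\quart,\half\}$ comes out of $[Y_n,Y_m]_+$ and $[X_n,Y_m]_+$ (the former closing into $\Span\{X_{n+m},X_{n+m+2p}\}$, the latter into $Y_{n+m}$), and for any other value of $\l$ the bracket $[X_n,X_m]_+$ already fails to lie in $\Sc$. This establishes the ``if and only if''; recording the constant gives $[X_n,X_m]_+=c\,X_{n+m}$ with $c=1$ for $\l=0,\half$ and $c=\half$ for $\l=\quart$.

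Finally, to identify the isomorphism type I would compare with the presentation of $\jtp$ displayed before Proposition~\ref{ThmSubalg}, setting up an isomorphism $\Phi\colon\Sc\to\Jc_\s(\t,p)$ by rescaling generators, $X_n\mapsto\mu x_n$, $Y_n\mapsto\nu y_n$, $A_i\mapsto\rho a_i$, $B_i\mapsto\tau b_i$. Matching $[X_n,X_m]_+=c\,X_{n+m}$ against $x_n\circ x_m=x_{n+m}$ forces $\mu=c$, while matching $[X_n,A_i]_+=\half A_{n+i}$ against $x_n\circ a_j=\s a_{n+j}$ forces $\s=\tfrac{1}{2\mu}=\tfrac{1}{2c}$; the remaining relations (e.g. $[A_i,A_j]_+=(j-i)Y_{i+j}$ versus $a_i\circ a_j=(j-i)y_{i+j}$, forcing $\nu=\rho^2$) then fix the other scalars consistently, and linear independence of the operators $X_n,Y_n,A_i,B_i$ makes $\Phi$ bijective. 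Thus $\s=\half$ when $\l=0,\half$ and $\s=1$ when $\l=\quart$, exactly the claimed dichotomy. I expect the bookkeeping of the $\t$-shifted terms in the even--even and $Y$-brackets to be the main source of difficulty and the likeliest place for index or sign errors; a useful structural check is that at $\l=\quart$ one has $X_0=\quart\,\id$, so $2X_0$ is a Jordan unit for $\Sc$, which explains a priori why $\Sc$ is then the unital (``vector type'') algebra $\Jc_1(\t,p)$, whereas for $\l=0,\half$ no such scalar operator exists and $\Sc$ is the half-unital $\Jc_{\half}(\t,p)$.
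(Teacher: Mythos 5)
Your proposal is correct and follows essentially the same route as the paper's proof: check that the odd--odd and even--odd anticommutators close with $\l$-independent structure constants, derive the constraint from the even--even brackets via the eigenvalue equations $2\l^2=c\,\l$ and $2(\half-\l)^2=c\,(\half-\l)$ (yielding $\l=0,\half$ with $c=1$ and $\l=\quart$ with $c=\half$), and then identify $\Sc$ with $\Jc_{\half}(\t,p)$ or $\Jc_1(\t,p)$ by rescaling the generators. Your extra touches --- justifying why $[X_n,X_m]_+$ can only close onto a multiple of $X_{n+m}$, and observing that $2X_0=\half\,\id$ is a Jordan unit at $\l=\quart$ (explaining a priori why that case gives the unital algebra) --- are refinements of, not departures from, the paper's argument.
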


\begin{proof}
By direct computation, one checks that the following holds true for any values of the parameter $\l$,
\begin{equation*}
\left\{
\begin{array}{ccl}
[X_n\;,\;A_j]_+&=&\half A_{n+j}\\[5pt]
[X_n\;,\;B_j]_+&=&\half B_{n+j}\\[5pt]
[Y_n\;,\;A_j]_+&=&\half B_{n+j}\\[5pt]
[Y_n\;,\;B_j]_+&=&\half (A_{n+j}+\t A_{n+j+2p})\\[5pt]
[A_i\;, \;A_j]_+&=&(j-i)Y_{i+j}\\[5pt]
[A_i\;, \;B_j]_+&=&(j-i)X_{i+j}+\t (j-i+p)X_{i+j+2p}\\[5pt]
[B_i\;, \;B_j]_+&=&(j-i)(Y_{i+j}+\t Y_{i+j+2p})\\[5pt]
\end{array}
\right.
\end{equation*}
In general, for arbitrary value of $\l$,
the Jordan bracket between even operators of $\Sc$ is not an element of $\Sc$. 
One has

\begin{equation*}\label{}
\left.
\begin{array}{ccl}
[X_n\;,\;X_m]_+\;\cdot \phi_j&=&2(\half -\l)^2\phi_{n+m+j}\;,\\[8pt]
[X_n\;,\;X_m]_+\;\cdot \g_j&=&2(\half -\l)^2\g_{n+m+j}\;,\\[8pt]
[X_n\;,\;X_m]_+\;\cdot f_k&=&2\l^2 f_{m+n+k}\;,\\[8pt]
[X_n\;,\;X_m]_+\;\cdot g_k&=&2\l^2 g_{m+n+k}\;,\\[15pt]
\end{array}
\right.
\quad
\left.
\begin{array}{ccl}
 X_{n+m}\;\cdot \phi_j&=&(\half -\l)\phi_{n+m+j}\;,\\[8pt]
 X_{n+m}\;\cdot \g_j&=&(\half -\l)\g_{n+m+j}\;,\\[8pt]
 X_{n+m}\;\cdot f_k&=&\l f_{m+n+k}\;,\\[8pt]
 X_{n+m}\;\cdot g_k&=&\l g_{m+n+k}\;,\\[15pt]
\end{array}
\right.
\end{equation*}
so that 
\begin{equation*}\label{}
\left.
\begin{array}{ccl}
[X_n\;,\;X_m]_+=\mu X_{n+m} 
\quad 
&\Longleftrightarrow&
\left\{
\begin{array}{ccl}
2(\half -\l)^2&=& \mu (\half -\l)\\
2\l^2&=& \mu \l
\end{array}
\right.\\[18pt]
\quad 
& \Longleftrightarrow&
\l=0, \mu=1 \text{ or } \l=\half, \mu=1 \text{ or } \l=\frac{1}{4}, \mu=\half.
\end{array}
\right.
\end{equation*}

Therefore, for $\l=0,\half$ and $\l=\frac{1}{4}$, one obtains respectively the following additional relations
\begin{equation*}
\left\{
\begin{array}{ccl}
[X_n\;,\;X_m]_+&=& X_{n+m}\\[5pt]
[X_n\;,\;Y_m]_+&=& Y_{n+m}\\[5pt]
[Y_n\;,\;Y_m]_+&=& X_{n+m}\\[5pt]
\end{array}
\qquad
,\qquad 
\right.
\left\{
\begin{array}{ccl}
[X_n\;,\;X_m]_+&=&\half X_{n+m}\\[5pt]
[X_n\;,\;Y_m]_+&=&\half Y_{n+m}\\[5pt]
[Y_n\;,\;Y_m]_+&=&\half X_{n+m}.\\[5pt]
\end{array}
\right.
\end{equation*}
In the case  $\l=0,\half$ we immediately obtain a specialization of $\Jc_{\half}(\t,p)$.
In the case $\l=\frac{1}{4}$ we obtain a specialization of $\Jc_{1}(\t,p)$ using the rescaling 
$X'_n=2X_n$, $Y'_n=2Y_n$, $A'_i=\sqrt{2}A_i$, $B'_i=\sqrt{2}B_i$.
\end{proof}

\begin{rem}
The cases $\l=0, \half$ correspond to the geometric situation described in Theorem \ref{goodrep}.
The case $\l=\quart$ does not appear in Theorem \ref{goodrep} as this value does not make sense for meromorphic tensor densities.
However, the unital algebra $\Jc_1(A,D)$ can be defined 
geometrically, see Remark \ref{unitalJKN}.
\end{rem}
\begin{rem}
The analog of Theorem \ref{ThmRep}, cases $\l=0, \half$,  was established 
 in \cite{LMG}. 
 for the algebras $\Ac\Kc(1)$ and $\Kf(1)$, the extra case $\l=\frac{1}{4}$ was not mentioned
 but can be easily achieved using formulas in \textit{loc.cit}.
 \end{rem}

\subsection{Krichever-Novikov superalgebras on the torus}\label{torus}
In \cite{Schli2}, \cite{FS}, \cite{FS2}, the case of two punctures on a surface of genus one is also studied.
The Krichever-Novikov superalgebras $\Lf_{KN}$ and $\Jc_{KN}$ associated to this case can be described algebraically as follows.
Consider 
\begin{equation*}
A=\C[x,y^{\pm 1}]/(x^2+\t_1{}\, y^{2}+\t_2{}\, y^4-1), 
\quad
D=x\partial_y-(\t_1{}\,{}\, y+2\t_2{}\, y^3)\partial_x.
\end{equation*}

The associated Jordan superalgebra $J_{\s}(A,D)$ is
\begin{equation*}
\left\{
\begin{array}{ccl}
x_n\; x_m&=&x_{n+m}\\[5pt]
x_n\;y_m&=&y_{n+m}\\[5pt]
y_n\;y_m&=&x_{n+m}-\t_1{}\,{}\, x_{n+m+2}-\t_2{}\, x_{n+m+4}\\[5pt]
x_n\;a_j&=&\s a_{n+j}\\[5pt]
x_n\;b_j&=&\s b_{n+j}\\[5pt]
y_n\;a_j&=&\s b_{n+j}\\[5pt]
y_n\;b_j&=&\s (a_{n+j}-\t_1{}\,{}\, a_{n+j+2}-\t_2{}\, a_{n+j+4})\\[5pt]
a_i\;a_j&=&(j-i)y_{i+j}\\[5pt]
a_i\;b_j&=&(j-i)x_{i+j}-\t_1{}\,{}\, (j-i+1)x_{i+j+2}-\t_2{}\, (j-i+2)x_{i+j+4}\\[5pt]
b_i\;b_j&=&(j-i)(y_{i+j}-\t_1{}\,{}\, y_{i+j+2}-\t_2{}\, y_{i+j+4})\\[5pt]
\end{array}
\right.
\end{equation*}
and the Lie superalgebra $\Lf(A,D)$ is
\begin{equation*}
\left\{
\begin{array}{ccl}
[A_i,A_j]&=&  L_{i+j},\\[5pt]
[A_i, B_j] &=&H_{i+j},\\[5pt]
[B_i, B_j] &=&  L_{i+j}-\t_1{}\,{}\, L_{i+j+2}-\t_2{}\, L_{i+j+4},\\[5pt]
[L_n\; , \;A_i]&=&(i-\frac{n}{2})B_{n+i}\\[5pt]
[L_n\; , \;B_i]&=&(i-\frac{n}{2})A_{n+i}-\t_1{}\,{}\,(i-\frac{n}{2}+1)A_{n+i+2}-\t_2{}\,(i-\frac{n}{2}+2)A_{n+i+4}\\[5pt]
[H_n\; , \;A_i]&=&(i-\frac{n}{2})A_{n+i}-\t_1{}\,{}\,(i-\frac{n}{2}-\half)A_{n+i+2}-\t_2{}\,(i-\frac{n}{2}-1)A_{n+i+4}\\[5pt]
[H_n\; , \;B_i]&=&(i-\frac{n}{2})B_{n+i}-\t_1{}\,{}\,(i-\frac{n}{2}+\half)B_{n+i+2}-\t_2{}\,(i-\frac{n}{2}+1)B_{n+i+4}\\[5pt]
[L_n, L_m] &=& (m-n) H_{n+m},\\[5pt]
[L_n, H_m] &=& (m-n)L_{n+m}-\t_1{}\,{}\,(m-n+1)L_{m+n+2}-\t_2{}\,(m-n+2)L_{m+n+4},\\[5pt]
[H_n, H_m] &=& (m-n)(H_{n+m}-\t_1{}\,{}\, H_{n+m+2}-\t_2{}\, H_{n+m+4})
\end{array}
\right.
\end{equation*}

The algebra $\Jc_1(\t,2)$ used in \cite{Zhe2} (see also Section \ref{newtype})
corresponds to the particular case
$\t_1=0$, $\t_2=\t$.

\medskip
\noindent
\textbf{Acknowledgement:} 
Authors are grateful to V. Ovsienko for helpful comments and enlightening discussions.
We are also pleased to thank K. Iohara for useful references.



\end{document}